\documentclass[reqno,12pt]{amsart}

\usepackage{amsmath,amssymb,amsthm,graphicx,mathrsfs,url}
\usepackage[usenames,dvipsnames]{color}
\usepackage[colorlinks=true,linkcolor=Red,citecolor=Green]{hyperref}
\usepackage{color}
\usepackage[font=footnotesize]{caption}
\usepackage{a4wide}
\newtheorem{theo}{Theorem}
\newtheorem{prop}{Proposition}[section]

\newtheorem{lemm}[prop]{Lemma}
\newtheorem{corr}[prop]{Corollary}

\numberwithin{equation}{section}
\newcommand{\mc}{\mathcal}
\newcommand{\rr}{\mathbb{R}}
\newcommand{\nn}{\mathbb{N}}
\newcommand{\cc}{\mathbb{C}}
\newcommand{\hh}{\mathbb{H}}

\newcommand{\eps}{\epsilon}
\newcommand{\pl}{\partial}
\newcommand{\x}{\times}

\newcommand{\til}{\widetilde}

\newcommand{\cjd}{\rangle}
\newcommand{\cjg}{\langle}

\title{Marked boundary rigidity for surfaces}

\author{Colin Guillarmou}
\address{DMA, UMR 8553 CNRS, \'Ecole Normale Sup\'erieure\newline\indent 45 rue d'Ulm, 75230 Paris cedex 05, France}
\email{cguillar@dma.ens.fr}

\author{Marco Mazzucchelli}
\address{UMPA, UMR 5669 CNRS, \'Ecole Normale Sup\'erieure de Lyon\newline\indent  46 all\'ee d'Italie, 69364 Lyon Cedex 07, France}
\email{marco.mazzucchelli@ens-lyon.fr}

\begin{document}

\date{February 9, 2016. \emph{Revised}: July 25, 2016}
\subjclass[2000]{53C20, 58E10}
\keywords{Boundary rigidity, lens data, geodesics}

\maketitle

\begin{abstract}
We show that, on an oriented compact surface, two sufficiently $C^2$-close  Riemannian metrics with strictly convex boundary, no conjugate points, hyperbolic trapped set for their geodesic flows, and same marked boundary distance, are isometric via a diffeomorphism that fixes the boundary. We also prove that the same conclusion holds on a compact surface for any two negatively curved Riemannian metrics with strictly convex boundary and same marked boundary distance, extending a result of Croke and Otal. 
\end{abstract}

\section{Introduction}

In a smooth oriented compact Riemannian surface $(M,g)$ with strictly convex boundary $\pl M$ and no conjugate points there is a unique geodesic in each homotopy class of curves joining any given pair of points on $\pl M$. 
The \emph{boundary distance function} can be defined as the restriction to $\pl M\x \pl M$ of the Riemannian distance function of $(M,g)$. This function measures the length of the minimizing geodesic joining the given   points on $\pl M$. If $M$ is not simply connected, in general we cannot expect to recover much information on the isometry class of $(M,g)$ from the boundary distance function alone: indeed, all the minimizing geodesics between boundary points may be contained in a rather small neighborhood of $\pl M$. Instead, it is natural to consider the set of lengths of all geodesics joining boundary points, and to study the associated rigidity problem. More precisely, we consider the \emph{lens data} of $g$, which consists of a pair of functions $(\ell_g,S_g)$ defined as follows.
The \emph{scattering map} $S_g$ takes a vector of unit length $(x,v)\in T_{\pl M}M$  pointing inside $M$, and returns the exit tangent vector $S_g(x,v)=(x',v')\in T_{\pl M}M$ obtained by following the geodesic 
$\alpha_{x,v}$ of $(M,g)$ starting at $x$ with tangent vector $v$. The \emph{length function} $\ell_g$ takes an inward-pointing unit vector  $(x,v)\in T_{\pl M}M$ and returns the length of the geodesic $\alpha_{x,v}$; note that $\ell_g(x,v)$ may be infinite at some points $(x,v)$. The lens rigidity problem asks whether the lens data $(\ell_g,S_g)$ determines the metric $g$ up to an isometry that is the identity on $\pl M$. 
In this paper, we also consider another related quantity that we call the \emph{marked boundary distance}, defined as follows.
There is a bundle $\mc{P}_{\pl M} \to \pl M\x \pl M$  whose fiber over $(x,x')\in \pl M\x \pl M$ is the set of homotopy classes of curves joining $x$ and $x'$. The \emph{marked boundary distance} is the function on $\mc{P}_{\pl M}$ given by
\[ (x,x';[\gamma])\mapsto {\rm length}(\alpha(x,x';[\gamma])),\]
where $\alpha(x,x';[\gamma])$ is the unique geodesic with endpoints $x$ and $x'$ 
in the homotopy class given by $[\gamma]$. The question that we address here is 
the rigidity of Riemannian metrics with the same marked boundary distance, much in the spirit of Otal's paper 
\cite{Otal:1990ta} on the marked length spectrum rigidity for closed negatively-curved surfaces. We remark that the marked boundary distance function gives the same information as the lens data on the universal cover of $M$, see Section~\ref{s:hyp_trapped_set}.

In order to describe our results, we first need to recall a few notions.
As usual, we denote by $SM=\{(x,v)\in TM\ |\ g_x(v,v)=1\}$  the unit tangent bundle, and by $\varphi_t$ the geodesic 
flow on $SM$ at time $t$, which is generated by the geodesic vector field $X$.
For all points $(x,v)\in SM$, the geodesic  
 with initial point $x$ and tangent vector $v$ has either infinite length or 
 exits $M$ at some boundary point $x'\in \pl M$. We define the \emph{trapped set} $K\subset SM^\circ$ of $g$
to be the set of points that do not reach the boundary in forward nor in backward time:
\[K := \big\{(x,v)\in SM^\circ\ \big|\ \forall t\in \rr, \, \varphi_t(x,v)\in SM^\circ \big\}.\] 
Due to the strict convexity of $\pl M$, $K$ is compact and invariant by the geodesic flow. In Section~\ref{s:hyp_trapped_set}, we will prove  that if a Riemannian metric $g$ on a compact surface has geodesic flow with hyperbolic trapped set  (see the definition before equation~\eqref{splitting}), strictly convex boundary, and no conjugate points, then the same is true for all smooth Riemannian metrics in a sufficiently small $C^2$-neighborhood of $g$.

Our first theorem is a local marked boundary rigidity result for surfaces with  hyperbolic trapped set and no conjugate points. Its proof is based on the recent work \cite{Guillarmou:2014jk} of the first author.
\begin{theo}\label{Th1}
Let $(M,g_1)$ be a smooth oriented compact Riemannian surface with strictly convex boundary, no conjugate points, and whose trapped set $K$ is hyperbolic. 
There exists $\delta>0$ such that, for each smooth  Riemannian metric $g_2$ on $M$  satisfying 
$\|g_2-g_1\|_{C^2}<\delta$ and having the same marked boundary distance of $g_1$,
there is a smooth diffeomorphism $\phi: M\to M$ with $\phi^*g_2=g_1$ and $\phi|_{\pl M}={\rm Id}$.
\end{theo}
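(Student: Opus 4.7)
The plan is a three-stage argument: reduce to a conformal perturbation via a surface-specific gauge choice, translate the marked-length hypothesis into a pointwise inequality for the geodesic X-ray transform on functions, and then invoke the s-injectivity and quantitative stability from \cite{Guillarmou:2014jk} to close a nonlinear bootstrap.

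The first stage exploits that on an oriented surface the space of metrics modulo boundary-fixing diffeomorphisms is locally parameterized by conformal factors. First, the marked boundary distance determines the induced metric on $\pl M$ (comparing short chords that hug short arcs of the strictly convex boundary), so $g_1|_{T\pl M}=g_2|_{T\pl M}$. Next, an implicit function argument based on the ellipticity of $X\mapsto L_Xg_1$ on vector fields vanishing on $\pl M$ produces, for $g_2$ sufficiently $C^2$-close to $g_1$, a smooth diffeomorphism $\phi:M\to M$ with $\phi|_{\pl M}=\mathrm{Id}$ and $\phi^*g_2=e^{2f}g_1$, where $f\in C^\infty(M)$ vanishes on $\pl M$ and is as small as desired in $C^2$. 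Since pullback preserves both marked distance and geodesics, the problem reduces to showing that $f\equiv 0$ whenever $g_1$ and $e^{2f}g_1$ share the same marked boundary distance.

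The second stage uses the minimization property. Under the no-conjugate-points hypothesis, in each homotopy class $[\gamma]$ of boundary-joining paths the $g_i$-geodesic $\alpha_i^{[\gamma]}$ uniquely minimizes the $g_i$-length within $[\gamma]$ (convexity of the energy along variations in a class free of conjugate points). The equality of marked lengths then gives
\[
L_{g_1}(\alpha_1^{[\gamma]})=L_{g_2}(\alpha_2^{[\gamma]})\le L_{g_2}(\alpha_1^{[\gamma]})=\int_{\alpha_1^{[\gamma]}}e^f\,ds_{g_1},
\]
so $I_{g_1}(e^f-1)(x,v)\ge 0$ on every inward-pointing $(x,v)\in T_{\pl M}M$ with finite $g_1$-exit time, where $I_{g_i}$ denotes the X-ray transform on functions along $g_i$-geodesics. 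Swapping $g_1\leftrightarrow g_2$ and using $ds_{g_1}=e^{-f}ds_{g_2}$ yields the reverse inequality $I_{g_2}(e^{-f}-1)\ge 0$. Expanding $e^{\pm f}-1=\pm f+O(f^2)$ and using that $g_i$-geodesics differ by $O(\|g_2-g_1\|_{C^2})=O(\|f\|_{C^2})$, so that $\|I_{g_1}-I_{g_2}\|\le C\|f\|_{C^2}$ on $C^1$-functions, both inequalities combine to the quantitative estimate
\[
\bigl|I_{g_1}f(x,v)\bigr|\le C\,\|f\|_{C^2}^2
\]
uniformly in inward-pointing $(x,v)$ with finite $g_1$-exit time.

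The third stage applies the main analytic results of \cite{Guillarmou:2014jk}. In the presence of a hyperbolic trapped set, strictly convex boundary, and no conjugate points, the X-ray transform on functions is s-injective and satisfies a Sobolev stability estimate of the form $\|f\|_{H^s(M)}\le C\,\|I_{g_1}f\|_{H^{s'}(\pl_+SM)}$ for suitable indices. Combined with the bound from the previous stage and an elliptic/interpolation bootstrap to upgrade back to $C^2$-regularity, this yields $\|f\|_{C^2}\le C'\,\|f\|_{C^2}^2$, which for $\delta$ small enough forces $f\equiv 0$; unwinding the conformal reduction gives $\phi^*g_2=g_1$. The principal obstacle is this third stage: the stability estimate relies on a delicate microlocal analysis of the normal operator $I_{g_1}^*I_{g_1}$, which in the presence of a trapped set is not a standard elliptic pseudodifferential operator and requires the anisotropic Fredholm/resolvent tools developed in \cite{Guillarmou:2014jk}, and the constants must be quantitative enough to absorb the quadratic error from the previous stage.
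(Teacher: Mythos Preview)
Your conformal reduction (Stage~1) is close in spirit to the paper's, though the paper obtains the diffeomorphism $\phi$ with $\phi^*g_2=e^{2\eta}g_1$ not from an implicit-function slice argument but by first extracting equality of the scattering maps from the marked boundary distance (Lemmas~\ref{bdrydistgivemet} and~\ref{lensvsdistance}) and then invoking \cite[Theorem~3]{Guillarmou:2014jk}; it then uses quasi-conformal/Beltrami continuity to show $\phi$ is $C^2$-close to the identity, which is what guarantees that $\phi$ is homotopic to ${\rm Id}$ rel~$\pl M$ and hence that $g_1$ and $\phi^*g_2$ still share the same \emph{marked} boundary distance. You gloss over this last point: ``pullback preserves marked distance'' is only true once $\phi$ is known to act trivially on homotopy classes of boundary-joining arcs.

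The substantive divergence is in how the conformal factor is killed. The paper does \emph{not} use any perturbative X-ray stability or bootstrap. Instead it applies an exact Croke-type argument (Lemma~\ref{croke}): Santal\'o's formula shows ${\rm Vol}_{g_1}(M)={\rm Vol}_{g_2}(M)$ from equality of the length functions on $\pl_-SM$, and the one-sided inequality $L_{g_2}(\alpha_{g_1})\ge L_{g_2}(\alpha_{g_2})$ (your Stage~2 observation) integrates via Santal\'o to force equality in a H\"older inequality, hence $\eta\equiv 0$. Smallness of $\delta$ is used in the paper \emph{only} to ensure that $g_2$ inherits the hypotheses (Proposition~\ref{stability}) and that $\phi$ is homotopic to the identity, never to close a quadratic bootstrap.

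Your Stages~2--3 have a concrete gap that the paper's route avoids. The claimed bound $|I_{g_1}f(x,v)|\le C\|f\|_{C^2}^2$ cannot be uniform in $(x,v)$: the quadratic remainder in $e^{\pm f}-1=\pm f+O(f^2)$ contributes a term of order $\ell_{g_1}(x,v)\,\|f\|_{C^0}^2$, and $\ell_{g_1}$ blows up as $(x,v)$ approaches $\Gamma_-$. Likewise, the asserted operator bound $\|I_{g_1}-I_{g_2}\|\le C\|f\|_{C^2}$ is problematic for long geodesics, since $g_1$- and $g_2$-geodesics with the same initial data diverge at an exponential rate governed by the hyperbolicity near $K$. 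Thus you do not actually control $I_{g_1}f$ in any norm on $\pl_-SM$ that would feed into a stability estimate, and the bootstrap $\|f\|_{C^2}\le C'\|f\|_{C^2}^2$ is unjustified. The Santal\'o/H\"older argument in Lemma~\ref{croke} neatly sidesteps this: it integrates the one-sided inequality against $d\mu_{g_1,\nu}$, where the $\ell_{g_1}$-weight is absorbed into the volume identity, and no comparison of $g_1$- and $g_2$-geodesics over long times is needed.
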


Our second theorem deals with the case where one of the metric has negative curvature, and can be seen as an extension of a celebrated result due to Croke \cite{Croke:1990fi} and Otal \cite{Otal:1990zh}. The method that we employ is essentially the one of \cite{Otal:1990zh}.
\begin{theo}\label{Th2}
Let $M$ be a smooth oriented compact surface with boundary. 
Let $g_1$ be a smooth Riemannian metric on $M$ with negative curvature and strictly convex boundary, and $g_2$ a smooth Riemannian metric on $M$ with strictly convex boundary, no conjugate points and trapped set of zero Liouville measure.  
If $g_1$ and $g_2$ have the same marked boundary distance, 
there is a diffeomorphism $\psi: M\to M$ with $\psi^*g_2=g_1$ and $\psi|_{\pl M}={\rm Id}$.
\end{theo}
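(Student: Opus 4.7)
I would follow Otal's approach \cite{Otal:1990zh}, extended to the multiply-connected setting via the universal cover. First, lift $g_1, g_2$ to metrics $\til g_1, \til g_2$ on the universal cover $\til M$; by Section~\ref{s:hyp_trapped_set}, the equality of marked boundary distance on $M$ transfers to equality of lens data for $\til g_1, \til g_2$ on $\til M$. Concretely, for each pair $\til x, \til y \in \pl \til M$ joined by a $\til g_1$-geodesic of finite length, there is a $\til g_2$-geodesic joining $\til x, \til y$ of the same length, and vice versa.

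Because $g_1$ is negatively curved, any $\til g_1$-geodesic between two points of $\pl\til M$ is the unique minimizer of $\til g_1$-length in its homotopy class of paths; the same uniqueness and minimality hold for $\til g_2$ thanks to the no-conjugate-points hypothesis on $g_2$ together with strict convexity of $\pl M$. This yields a natural pairing $\Phi$ of non-trapped inward-pointing unit tangent vectors on $\pl \til M$ for the two metrics, identifying geodesics with common endpoints (and hence, by hypothesis, equal length).

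The rigidity then follows from a Santaló-type volume comparison. Santaló's formula gives $2\pi\,\mathrm{Vol}(M, g_i) = \int_{\pl_+ SM_i} \ell_{g_i}\,d\mu_{\nu_i}$. Using the $\til g_1$-minimality of paired geodesics in their homotopy class, one has $L_{\til g_2}(\til\alpha_2) = L_{\til g_1}(\til\alpha_1) \leq L_{\til g_1}(\til\alpha_2) = \int_0^{\ell_{g_2}(x, v_2)}|\dot{\til\alpha}_2|_{\til g_1}\,dt$ for every paired pair $(\til\alpha_1, \til\alpha_2)$. Integrating against $d\mu_{\nu_2}$ and reapplying Santaló with integrand $|v|_{g_1}$ yields
\[ 2\pi\,\mathrm{Vol}(M, g_2) \leq \int_{SM_2} |v|_{g_1}\,d\mu_{L, g_2}. \]
The symmetric bound on $\mathrm{Vol}(M, g_1)$, obtained by swapping the roles of $g_1$ and $g_2$ and using the $\til g_2$-minimality of paired $\til g_2$-geodesics in their homotopy class (from no conjugate points for $g_2$), gives the reverse inequality. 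A pointwise analysis on tangent spaces forces equality throughout the Santaló chain, which in turn forces each paired pair of geodesics to coincide as parametrized arcs in $M$. The two metrics therefore share the same unparametrized geodesic foliation with matching arc-lengths, and a standard argument then produces a smooth diffeomorphism $\psi: M \to M$ fixing $\pl M$ with $\psi^* g_2 = g_1$.

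The main obstacle will be the pointwise equality analysis at the last step: since the roles of $g_1$ (negatively curved) and $g_2$ (only no conjugate points) are asymmetric, the Cauchy--Schwarz/isoperimetric argument used by Croke and Otal in the symmetric case must be carefully rebalanced, relying on the $g_2$-minimality property as a replacement for negative curvature of $g_2$. The zero Liouville measure hypothesis on the $g_2$-trapped set is crucial to guarantee that $\Phi$ is defined almost everywhere and that the two Santaló integrals close up into true volume identities, rather than only inequalities up to a defect supported on the trapped set.
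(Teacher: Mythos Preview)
Your proposal has a genuine gap, and it also misidentifies the method: what you sketch is Croke's Santal\'o volume comparison, not Otal's approach, and it is precisely the step you flag as ``the main obstacle'' that fails.

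From the two inequalities
\[
2\pi\,\mathrm{Vol}(M,g_2)\le\int_{SM_2}|v|_{g_1}\,d\mu_{g_2}
\qquad\text{and}\qquad
2\pi\,\mathrm{Vol}(M,g_1)\le\int_{SM_1}|v|_{g_2}\,d\mu_{g_1},
\]
together with $\mathrm{Vol}(M,g_1)=\mathrm{Vol}(M,g_2)$, you cannot deduce equality in either one: the right-hand sides are integrals over \emph{different} unit bundles and there is no inequality linking them back to the common volume. In the conformal case $g_2=e^{2\omega}g_1$ the fibre integral collapses to a scalar and H\"older closes the loop---this is exactly Lemma~\ref{croke}, which the paper uses only for Theorem~\ref{Th1}, \emph{after} a conformal reduction. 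Without conformality the fibrewise quantity $\int_{S^{g_2}_xM}|v|_{g_1}\,dS_x$ is an ellipse perimeter admitting no useful two-sided bound in terms of $\det(g_1/g_2)$, so the chain does not close and your ``pointwise analysis on tangent spaces forces equality'' is unavailable. Even granting equality, you would only obtain that $g_2$-geodesics are $g_1$-length-minimizers, hence $g_1$-geodesics as unparametrized curves; producing an isometry from this still requires further work that you have not supplied.

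The paper instead follows Otal's actual strategy. It first shows (Corollary~\ref{same_dtilde_same_Liouville}, via Lemma~\ref{douady}) that equal marked boundary distance forces the Liouville measures $\eta_{\til g_1}=\eta_{\til g_2}$ on the space of geodesics $\mc{G}=(\pl\til M\times\pl\til M)\setminus\mathrm{diag}$. Then, for two $\til g_1$-geodesics meeting at angle $\theta$, it records the angle $\theta''(x,v,\theta)$ between the $\til g_2$-geodesics with the same boundary endpoints. Gauss--Bonnet on the resulting geodesic triangle in the negatively curved metric gives superadditivity of $\theta''$ (Lemma~\ref{lemma_theta}); the equality of Liouville measures combined with Santal\'o in the coordinates of Lemma~\ref{Santalo} yields $\int_0^\pi f(\Theta(\theta))\sin\theta\,d\theta\le\int_0^\pi f(\theta)\sin\theta\,d\theta$ for every convex $f$, where $\Theta$ is the $\mu_{g_1}$-average of $\theta''$. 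These together force $\Theta=\mathrm{id}$, hence equality in Gauss--Bonnet everywhere, hence every triple of concurrent $\til g_1$-geodesics is also concurrent for $\til g_2$. This \emph{constructs} the diffeomorphism $\til\psi$ pointwise, and the isometry property then follows from~\eqref{moregeneral}. The construction of $\psi$ is the heart of the argument, not a ``standard'' appendix to a volume identity.
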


We remark that the assumptions on the Riemannian metric $g_2$ in Theorem \ref{Th2} are satisfied if $(M,g_2)$ has negative curvature and strictly convex boundary.

In order to put our theorems into perspective, let us recall a few results on related rigidity questions for surfaces. Riemannian metrics with strictly convex boundary, no conjugate points, and empty trapped set are called \emph{simple metrics}. Their underlying compact surfaces are topological disks. In 1981, Michel conjectured that simple Riemannian metrics on compact surfaces are boundary rigid \cite{Michel:1981pz}, i.e.\ the boundary distance function determines the Riemannian metric within the class of simple metrics up to isometry. Mukhometov \cite{Muhometov:1981jh} proved this conjecture under the extra assumption that the simple metrics considered belong to a given conformal class (see also Croke \cite{Croke:1991le} for a simpler proof). Later, Croke \cite{Croke:1990fi} and Otal \cite{Otal:1990zh} proved Michel's conjecture 
for negatively-curved simple metrics. This was extended to non-positively curved simple metrics by Croke, Fathi, and Feldman \cite{Croke:1992qf}. The conjecture for simple surfaces was finally established by Pestov and Uhlmann \cite{Pestov:2005bu}.
As for non simple metrics on compact surfaces, Croke and Herreros \cite{Croke:2011pd} proved that a negatively-curved cylinder, the flat cylinder, and the flat M\"obius strip are lens rigid. In \cite{Guillarmou:2014jk}, the first author showed that the scattering map $S_g$ determines the compact manifold and the Riemannian metric, within the class considered in Theorem~\ref{Th1}, up to a conformal diffeomorphism.  

\subsection*{Acknowledgements.} C.G. is partially supported by the ANR projects GERASIC (ANR-13-BS01-0007-01) and IPROBLEMS (ANR-13-JS01-0006). M.M. is partially supported by the ANR projects WKBHJ (ANR-12-BS01-0020) and COSPIN (ANR-13-JS01-0008-01).

\section{Manifolds with hyperbolic trapped set and no conjugate points}\label{s:hyp_trapped_set}
 \subsection{Geometric and dynamical preliminaries}
Let $(M,g)$ be a compact Riemannian surface with strictly convex boundary. As before, we denote by $X$ its geodesic vector field on $SM$, and by $\varphi_t$ the corresponding geodesic flow. We also equip $SM$ with a contact form $\lambda$, which is obtained by pulling back the canonical Liouville form on $T^*M$ via the diffeomorphism $(x,v)\mapsto(x,g_x(v,\cdot))$. The Liouville measure $\mu_g$ on $SM$ is defined as the measure associated to the density $|\lambda\wedge d\lambda|$. We consider the pull-back Riemannian metric $\til{g}=\pi^*g$ on the universal cover $\pi: \til{M}\to M$. The fundamental group $\pi_1(M)=\pi_1(M,x_0)$, for some fixed $x_0\in M$, acts by isometries on $(\til{M},\til{g})$. 
The universal cover $\til{M}$ is non-compact and has non-compact boundary $\pl\til{M}$ that is a countable union of connected components and projects to $\pl M$. The unit tangent bundle $S\til{M}$ of $\til{M}$ is also a cover of $SM$ and, by abuse of notation, we still denote by $\pi:S\til{M}\to SM$ the covering map. We will denote by $\til{\varphi}_t$ and $\mu_{\til g}$ the geodesic flow and the Liouville measure on $S\til M$. Notice that $\til{\varphi}_t$ is a lift of $\varphi_t$, i.e.\ $\varphi_t\circ\pi=\pi\circ\til{\varphi}_t$, and if $\gamma.(x,v)$ denotes the natural action of $\pi_1(M)$ on $SM$, we have 
\begin{align*}
\til{\varphi}_t(\gamma\cdot(x,v))=\gamma\cdot(\til{\varphi}_t(x,v)),
\qquad
\forall (x,v)\in S\til{M},\ \gamma\in \pi_1(M).
\end{align*}
We define the incoming ($-$), outgoing ($+$) and tangent ($0$) boundaries of $SM$ 
\begin{align*}
\pl_\mp SM & :=\{ (x,v)\in \pl SM\ |\  \pm \cjg v,\nu\cjd> 0\}, \\ 
\pl_0SM & :=\{ (x,v)\in \pl SM\ |\  \cjg v,\nu\cjd= 0\}.
\end{align*}
where $\nu$ is the inward-pointing unit normal vector field to $\pl M$.
For each $(x,v)\in SM$, we define the time of escape from $M$ by
\begin{equation*}
\ell_g(x,v):=\sup\, \{ t\geq 0\ |\  \varphi_{s}(x,v)\in SM^\circ\ \ \forall s<t \}\subset [0,+\infty].
\end{equation*}
We define $\pl_\pm \til{M}$, $\pl_0S\til{M}$, and the time of escape 
$\ell_{\til{g}}:S\til{M}\to [0,\infty]$ analogously. Notice that $\ell_{\til{g}}=\ell_{g}\circ\pi$. We define the incoming $(-)$ and outgoing $(+)$ tails in $SM$ as  
\[\Gamma_\mp :=\{(x,v)\in SM\ |\  \ell_g(x,\pm v)=\infty\},\]
and the trapped set for the geodesic flow as
\begin{equation}\label{trapped} 
K:=\Gamma_+\cap \Gamma_-.
\end{equation}
Using the strict convexity of $\pl M$, it is straightforward to see that that $\Gamma_\pm$ and $K$ are compact, and $K$ is a subset of $SM^\circ$  invariant by the geodesic flow. 
By the strict convexity of the boundary, the argument in \cite[Section~5.1]{Dyatlov:2014gf} shows that 
\begin{equation}\label{muK}
\mu_g(K)=0 \iff \mu_g(\Gamma_+\cup\Gamma_-)=0.
\end{equation}
We say that  $K$ is \emph{hyperbolic} when, for all $y\in K$, there is a $d\varphi_t$-invariant splitting 
\begin{align}\label{splitting}
T_y(SM)=\rr X(y)\oplus E_u(y)\oplus E_s(y),
\end{align}
where  $E_u$ and $E_s$ are continuous subbundles over $K$ satisfying, for some $C,\alpha>0$ and for all $y\in K$,
\begin{equation}\label{contraction} 
 \begin{split}
 \zeta\in E_s(y) & \qquad\mbox{if and only if}\qquad \|d\varphi_t(y)\zeta\|_g\leq Ce^{-\alpha  t}\|\zeta\|_g,\ \ \forall t\geq 0, \\
 \zeta \in E_u(y) &\qquad\mbox{if and only if}\qquad  \|d\varphi_t(y)\zeta\|_g\leq Ce^{\alpha t}\|\zeta\|_g,\ \  \forall t\leq 0.
 \end{split} 
\end{equation}
The norms in these expressions are given by the Sasaki metric on $SM$, see e.g.\ \cite{Paternain:1999uq}. 
We define the vertical bundle by 
\[V:=\ker(d\pi)\subset T(SM),\] 
where $\pi:SM\to M$ is the base projection. We say  that $x_-,x_+\in M$ are \emph{conjugate points} with respect to the geodesic  flow $\varphi_t$ if there exists $t_0\in\rr$ such that $\varphi_{t_0}(x_-)=x_+$ and  
$(d\varphi_{t_0}(x_-)V(x_-))\cap V(x_+)\not=\{0\}$.
By \cite[Lemma 2.10]{Dyatlov:2014qy}, there is a continuous extension $E_-\subset T_{\Gamma_-}(SM)$ of $E_s$ over $\Gamma_-$ that is invariant by the geodesic flow and satisfies contraction estimates of the form \eqref{contraction}. If  
$(M,g)$ has no conjugate points,  $V\cap E_-=\{0\}$, see \cite[Section 2.2]{Guillarmou:2014jk}. There is an analogous extension $E_+\subset T_{\Gamma_+}(SM)$ of $E_u$ over $\Gamma_+$.

\begin{prop}\label{stability}
Let $(M,g)$ be a compact Riemannian surface with strictly convex boundary, no conjugate points, and with hyperbolic trapped set for its geodesic flow. There exists $\delta>0$ such that, for each Riemannian metric $g'$ satisfying $\|g'-g\|_{C^2}<\delta$, the Riemannian surface  $(M,g')$ has strictly convex boundary, no conjugate points, and hyperbolic trapped set for its geodesic flow.
\end{prop}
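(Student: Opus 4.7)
The proposition splits into three claims---strict convexity of $\partial M$, hyperbolicity of the trapped set, and absence of conjugate points---which I treat in order of difficulty. The second fundamental form of $\partial M$ is a continuous function of $g$ in the $C^2$-topology, so positive-definiteness at every boundary point is $C^2$-open, giving strict convexity for all $g'$ sufficiently close to $g$.

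For the hyperbolic trapped set, the plan is to extend $(M,g)$ to a slightly larger closed Riemannian surface $(\widehat M, \widehat g)$ with $M\Subset \widehat M$ and $\widehat g|_M=g$, and view $K$ as a compact invariant hyperbolic set of the geodesic flow $\widehat\varphi$ of $\widehat g$ on $S\widehat M$. Strict convexity of $\partial M$, together with the definition \eqref{trapped} of $K$, ensures that $K$ is locally maximal in $S\widehat M$: in a sufficiently small neighborhood $\mathcal O$ of $K$, any $\widehat\varphi$-invariant subset is contained in $K$. By the classical structural stability theorem for locally maximal hyperbolic sets, every smooth flow sufficiently $C^1$-close to $\widehat\varphi$ admits a locally maximal hyperbolic invariant set near $K$. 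For $g'$ sufficiently $C^2$-close to $g$, one extends $g'$ to $\widehat g'$ on $\widehat M$ keeping $\widehat g'$ close to $\widehat g$; the corresponding geodesic flow is $C^1$-close to $\widehat\varphi$, producing a hyperbolic invariant set $K'$ near $K$. A uniform escape-time estimate outside a small neighborhood of $K$, combined with the strict convexity of $\partial M$ for $g'$ established above, shows that every orbit of the new flow starting in $SM\setminus K'$ exits $M$ in both forward and backward time, so $K'$ coincides with the trapped set of $g'$.

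The subtlest step is the persistence of the no-conjugate-points property. Since the vertical bundle $V\subset T(SM)$ is independent of the metric, what must be shown is that no Jacobi field along a $g'$-geodesic arc in $M$ vanishes at two distinct points. I split the argument by arc length. For arcs of length at most a fixed $T$, continuous dependence of Jacobi fields on $g$ in the $C^2$-topology, combined with compactness of the set of unit-speed arcs of length at most $T$ in $SM$, gives persistence. For arcs longer than $T$, the underlying orbit must spend most of its time near $K'$, hence near $K$; one then appeals to the fact from \cite[Section 2.2]{Guillarmou:2014jk} (based on \cite[Lemma 2.10]{Dyatlov:2014qy}) that the no-conjugate-points condition is equivalent, on $\Gamma_-$, to the transversality $V\cap E_-=\{0\}$. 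The main obstacle is to compare $E_-(g)$ and $E_-(g')$, since the domains $\Gamma_-(g)$ and $\Gamma_-(g')$ themselves depend on the metric in a non-continuous way. The natural fix is to characterize $E_-(g)(x,v)$ dynamically as a limit of pushforwards of a vertical direction under $d\widehat\varphi_{-t}$ as $t$ approaches the forward escape time, and to exploit the uniform exponential contraction on $\mathcal O$---which persists under $C^1$-perturbation by the previous step---to deduce that $E_-(g')$ is uniformly close to $E_-(g)$ on compact subsets of a common domain, thereby preserving the transversality with $V$.
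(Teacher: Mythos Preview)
Your treatment of strict convexity is fine and matches the paper. Your hyperbolicity argument via structural stability of locally maximal hyperbolic sets is a reasonable alternative to the paper's route, which instead builds cone fields $C_{\pm,\rho}$ on a neighborhood of $\Gamma_\pm$ and invokes the cone criterion \cite[Prop.~17.4.4]{Katok:1995xi}. One caution: structural stability hands you a hyperbolic continuation $K'$ near $K$, but you must still argue that the trapped set of $g'$ is \emph{contained} in $K'$, not merely that $K'$ is invariant. This needs the escape-time estimate you allude to, and it is cleaner to phrase it via the neighborhoods $U_{\tau}$ as the paper does.

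The real gap is in the no-conjugate-points step for long arcs. The transversality $V\cap E_-=\{0\}$ is a statement on $\Gamma_-$, and the equivalence you cite concerns conjugate points \emph{along orbits in $\Gamma_-$}. A long $g'$-geodesic $\gamma:[0,4\tau]\to M$ need not lie in $\Gamma_-(g')$ at all---its endpoints are on $\partial M$---so even a perfect comparison of $E_-(g)$ and $E_-(g')$ on their respective (different) domains does not directly prevent $d\varphi'_{4\tau}(y_-)V(y_-)\cap V(y_+)\neq\{0\}$. Your ``natural fix'' paragraph proposes to control $E_-(g')$ dynamically, but never explains how this yields a statement about Jacobi fields along a finite arc whose endpoints lie far from $\Gamma_\pm(g')$. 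The missing idea is that one should track where $d\varphi'_t$ sends $V$ after finite time, not where the invariant bundle sits.

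This is exactly what the paper's cone argument accomplishes, and it is why the cone fields---rather than the bundles $E_\pm$ themselves---are the right objects here. The no-conjugate-points hypothesis for $g$ is used only once, to show that after a fixed time $\tau_0$ the flow $d\varphi_{\pm t}$ carries $V$ strictly inside the unstable/stable cone $C_{\pm,\alpha_0}$ (equation~\eqref{e:vertical_into_cone}); this is a compact-time statement and transfers to $g'$ by $C^1$-closeness. The cone-contraction estimate~\eqref{e:expansion_g_2} for $g'$ then iterates to give~\eqref{e:vertical_into_cone_2}: along any sufficiently long $g'$-arc, pushing $V(y_-)$ forward to the midpoint lands in $C_{+,\rho_0}$, pushing $V(y_+)$ backward lands in $C_{-,\rho_0}$, and these cones are disjoint. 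The point is that cone estimates are stable under $C^1$-perturbation on finite time intervals and apply to any orbit segment lying in the relevant neighborhood, whereas your bundle comparison requires the orbit to be genuinely trapped.
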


\begin{proof}
We denote by $\varphi_t$ the geodesic flow of the Riemannian metric $g$, by $\Gamma_{\mp}\subset SM$ its incoming and outgoing tails, and by $K\subset SM^\circ$ its trapped set as usual. Consider the family of open sets
\begin{align}\label{e:fundamental_system}
U_{\tau,\pm}
:=
\big\{y\in SM\ \big|\  \varphi_{\mp t}(y)\in SM^\circ\ \ \forall t\in[0,\tau]
\big\},
\end{align}
for $\tau>0$. This family forms a fundamental system of open neighborhoods of $\Gamma_\pm$, that is, for every open neighborhood $U\subset SM$ of $\Gamma_\pm$ there exists $\tau>0$ large enough such that $U_{\tau,\pm}\subset U$. Analogously, the intersections $U_{\tau}:=U_{\tau,+}\cap U_{\tau, -}$ form a fundamental system of open neighborhoods of the trapped set $K$. We refer the reader to~\cite{Dyatlov:2014qy} for a proof of these facts.

Consider the hyperbolic splitting~\eqref{splitting} over $K$, and the extensions $E_\mp\subset T_{\Gamma_\mp}(SM)$ of the stable and unstable bundles. We further extend $E_\mp$ continuously over a neighborhood $U_{\tau_0,\mp}$ of $\Gamma_\pm$, and define the stable and unstable cones as
\begin{align*}
C_{\mp,\rho}(y)
&:=
\big\{
\xi+\eta\ \big|\ \xi\in E_\mp(y),\ \eta\in E_\pm(y)\oplus \rr X(y),\ \|\eta\|_{g}\leq\rho\|\xi\|_{g}
\big\},
\end{align*}
where $y\in U_{\tau_0,\mp}$ and $\rho>0$.  
We claim that, for all large enough real numbers $\tau_0>0$ and $\alpha_0>0$, we have 
\begin{align}\label{e:vertical_into_cone}
d\varphi_{\pm t}(y)V(y) \Subset C_{\pm ,\alpha_0}(\varphi_{\pm t}(y)),
\qquad
\forall 
y\in U_{2\tau_0,\mp},\ 
\forall t\in [\tau_0,2\tau_0]. 
\end{align}
Indeed, if this is not the case, by a compactness argument, for all sequences $\tau_n\to +\infty$ there exist $y^\mp_n\in U_{2\tau_n,\mp}$ and $t_n>\tau_n$ such that $d\varphi_{\pm t_n}(y^\mp_n)V(y^\mp_n)\cap E_\mp(\varphi_{\pm t_n}(y^\mp_n))\not=0$. Since the Riemannian manifold $(M,g)$ has no conjugate points, the vertical bundle $V$ is contained in a conic 
neighborhood $C_{\pm,\rho}$, for some $\rho>0$, whose closure does not contain $E_\mp$. Therefore, we can apply Lemma~2.11 of \cite{Dyatlov:2014qy} to deduce that the distance of $d\varphi_{\pm t_n}(y^\mp_n)V(y^\mp_n)$ to $E_+$ 
in the Grassmannian bundle of $SM$ tends to $0$ as $n\to \infty$, which gives a contradiction.

We fix $\tau_0>0$ large enough so that~\eqref{e:vertical_into_cone} holds and, for some $\rho_0>0$ small enough, we have
\begin{align*}
 C_{+,\rho_0}(y)\cap C_{-,\rho_0}(y) = \{0\},\qquad\forall y\in U_{\tau_0}.
\end{align*}
By the hyperbolicity of the flow on $K$, for each pair of positive numbers $\alpha\geq\rho>0$ there exists $\overline \tau=\overline\tau(\alpha,\rho)>2\tau_0$ large enough such that, for all  $t\geq\overline\tau$ and  $y\in U_{\tau_0} \cap \varphi_{\mp t}^{-1}(U_{\tau_0})$,
\begin{equation}
\label{e:expansion_g_1}
\begin{split}
& d\varphi_{\mp t}(y)C_{\mp,\alpha}(y) \Subset C_{\mp,\rho}(\varphi_{\mp t}(y)),\\
& \|d\varphi_{\mp t}(y)\zeta\|_{g}  \geq 4\|\zeta\|_{g},\quad  \forall\zeta\in C_{\mp,2\rho}(y).
\end{split} 
\end{equation}

There exists $\delta=\delta(\overline{\tau})>0$ such that, for any Riemannian metric $g'$ on $M$ satisfying $\|g'-g\|_{C^2}<\delta$, the boundary of $(M,g')$ is strictly convex and no pair of points of $(M,g')$ are conjugate along a geodesic of length less than or equal to $4\overline{\tau}$.  Let $\psi:SM\to S'M$ be the diffeomorphism given by $\psi(x,v)=(x,v/\|v\|_{g'})$, where $S'M$ is the unit tangent bundle of $M$ with respect to  $g'$. Since $\psi$ preserves the fibers of the unit tangent bundles, its differential maps the vertical subbundle $V\subset T(SM)$ to the vertical subbundle of $T(S'M)$. From now on, we identify $SM$ with $S'M$ by means of $\psi$, and thus we see the geodesic flow $\varphi'_t$ of $g'$ as a flow on $SM$. We define the open sets $U_{\tau,\pm}'$ analogously to~\eqref{e:fundamental_system}, that is,
\begin{align*}
U_{\tau,\pm}'
:=
\big\{y\in SM\ \big|\  \varphi_{\mp t}'(y)\in SM^\circ\ \ \forall t\in[0,\tau]
\big\},
\end{align*}
and $U_\tau':=U_{\tau,-}'\cap U_{\tau,+}'$. Notice that $\varphi'_t$ is $\mathcal{O}(\delta)$-close to $\varphi_t$ in the $C^1$-topology for all $t\in[0,2\tau_0]$. In particular, up to further reducing $\delta>0$, we have $U_{2\tau_0,\mp}'\subset U_{\tau_0,\mp}$, and the following holds: by~\eqref{e:expansion_g_1}, for all  $t\in[\overline\tau,2\overline{\tau}]$ and  $y\in U_{2\tau_0}' \cap \varphi_{\pm t}'(U_{2\tau_0}')$
we have
\begin{equation}
\label{e:expansion_g_2}
\begin{split}
& d\varphi_{\mp t}'(y)C_{\mp,\alpha_0}(y) \Subset C_{\mp,\rho_0}(\varphi_{\mp t}'(y)),\\
& \|d\varphi_{\mp t}'(y)\zeta\|_{g'}  \geq 2\|\zeta\|_{g'},\quad  \forall\zeta\in C_{\mp,2\rho_0}(y);
\end{split} 
\end{equation}
in addition, by \eqref{e:vertical_into_cone}, we have
\begin{align}\label{e:vertical_into_cone_1}
d\varphi_{\pm t}'(y)V(y) \Subset C_{+,\alpha_0}(\varphi_{\pm t}'(y)),
\qquad
\forall 
y\in U_{2\tau_0,\mp}',\ 
t\in[\tau_0,2\tau_0]. 
\end{align}
Equations~\eqref{e:expansion_g_2} and~\eqref{e:vertical_into_cone_1} imply that
\begin{align}
\label{e:vertical_into_cone_2}
d\varphi_{\pm t}'(y)V(y) \Subset C_{\pm,\rho_0}(\varphi_{\pm t}'(y)),
\qquad
\forall 
y\in U_{2\tau_0,\mp}'\cap \varphi_{\mp t}'(U_{2\tau_0}'),\ 
t\geq2\overline\tau,
\end{align}

By \cite[Proposition 17.4.4]{Katok:1995xi}, the set
\begin{align*}
K':=\bigcap_{t\in\rr}\varphi'_t(U_{2\tau_0}'),
\end{align*}
which is the trapped set for the geodesic flow $\varphi_t'$, is hyperbolic. We claim that the Riemannian metric $g'$ has no conjugate points. Indeed, assume that two points $\gamma(0)$ and $\gamma(4\tau)$ are conjugate along a unit-speed geodesic $\gamma:[0,4\tau]\to M$ of $(M,g')$. By the property of the Riemannian metric $g'$, we must have $\tau>\overline{\tau}$. We set
\[y_-  :=(\gamma(0),\dot{\gamma}(0)),\qquad y_+  :=(\gamma(4\tau),\dot{\gamma}(4\tau)).\]
Notice that 
$y_-\in U_{2\tau_0,-}'$ and $y_+\in U_{2\tau_0,+}'$. Therefore, by \eqref{e:vertical_into_cone_2}, we have  
\begin{align*}
d\varphi_{\pm2\tau}'(y_\mp)V(y_\mp) &\Subset C_{\pm,\rho_0}(\varphi_{\pm2\tau}'(y_\mp)).
\end{align*}
Since $C_{+,\rho_0}\cap C_{-,\rho_0}=\{0\}$ over $U'_{2\tau_0}$, we deduce that $d\varphi_{4\tau}'(y_-)V(y_-)\cap V(y_+)=\{0\}$, which contradicts the facts that $\gamma(0)$ and $\gamma(4\tau)$ are conjugate along $\gamma$.
\end{proof}

We now recall Santal\'o's formula corresponding to the disintegration of the Liouville measure $\mu_g$ on geodesics. Let $h:=g|_{T\pl M}$ be the metric induced on the boundary. 
There is a natural measure on $\pl SM$, defined by 
\begin{equation}\label{dmunu}
d\mu_{g,\nu}(x,v):= |g_x(v,\nu_x)|\, d{\rm vol}_{h}(x)\,dS_x(v).
\end{equation}
where $\nu$ is the inward-pointing unit normal vector field to $\pl M$, $d{\rm vol}_{h}$ is the Riemannian volume form on $(\partial M,h)$, and $dS_x$ is the volume measure  
on the fiber $S_xM$ induced by $g$.  
Assume that $\mu_g(\Gamma_-\cup \Gamma_+)=0$, or equivalently $\mu_g(K)=0$ according to \eqref{muK}. Santal\'o's formula reads 
\begin{equation}\label{santalo} 
\int_{SM}fd\mu_g = \int_{\pl_-SM\setminus \Gamma_-}\int_{0}^{\ell_g(x,v)}f(\varphi_t(x,v)) dt \,d\mu_{g,\nu}(x,v),\qquad \forall f\in L^1(SM).
\end{equation}
In particular, for $f\equiv 1$, Santal\'o's formula computes the Riemannian volume of $SM$ as
\begin{align}\label{Volume}
\mathrm{Vol}_{g}(SM) = \int_{\pl_-SM\setminus \Gamma_-} \ell_g(x,v) \,d\mu_{g,\nu}(x,v).
\end{align}
We define the incoming and outgoing tails in $S\til M$ as before by
\[\til{\Gamma}_\mp :=\{(x,v)\in SM\ |\  \ell_{\til{g}}(x,\pm v)=\infty\}.\]
Notice that $\pi(\til\Gamma_\mp)=\Gamma_\mp$. If $K$ is hyperbolic,  
the sets $\Gamma_\pm$  have measure zero with respect to $\mu_g$, see \cite[Section 2.4]{Guillarmou:2014jk}; in particular, $\til{\Gamma}_\pm$ have measure zero with respect to $\mu_{\til g}$ as well. We denote by $d\mu_{\til g,\nu}$ the lift of $d\mu_{g,\nu}$ to $\pl S\til{M}$. Since $\til{\Gamma}_\pm$ have also measure zero with respect to $\mu_{\til g}$, Santal\'o's formula on $S\til{M}$ reads  
\begin{equation}\label{santalo'} 
\int_{S\til{M}}fd \mu_{\til g} = \int_{\pl_-S\til{M}\setminus \til{\Gamma}_-}\int_{0}^{\ell_{\til{g}}(x,v)}
f(\til{\varphi}_t(x,v)) dt \,d\mu_{\til g,\nu}(x,v),\qquad
\forall f\in L^1(S\til{M}).
\end{equation}

\subsection{Scattering map, lens data, and boundary distance functions}
We define the \emph{scattering map} of $(M,g)$ as follows
\begin{equation}\label{scattering} 
S_g :  \pl_-SM\setminus \Gamma_- \to \pl_+SM\setminus \Gamma_+, \quad S_g(x,v):=\varphi_{\ell_g(x,v)}(x,v).
\end{equation}
The \emph{lens data} of $(M,g)$ is the pair of functions 
$(S_g,\ell_g|_{\pl_-SM})$, which amounts to knowing 
the length of all geodesics joining boundary points and their tangent vectors at the boundary. The analogous definitions hold for the universal cover $(\til{M},\til{g})$. In particular, the scattering map
\begin{equation}\label{scattering'} 
S_{\til{g}} :  \pl_-S\til{M}\setminus \til{\Gamma}_- \to \pl_+S\til{M}\setminus \til{\Gamma}_+, \quad S_{\til g}(x,v):=\til{\varphi}_{\ell_{\til{g}}(x,v)}(x,v)
\end{equation}
satisfies $S_g\circ\pi=\pi\circ S_{\til{g}}$ and is $\pi_1(M)$-equivariant, i.e.
\begin{equation}\label{equivaraince}
S_{\til{g}}(\gamma\cdot (x,v))=\gamma\cdot(S_{\til{g}}(x,v)),\qquad \forall (x,v)\in \pl_-S\til{M}\setminus \til{\Gamma}_-,\ \gamma\in \pi_1(M).
\end{equation}
The lens data of $(\til{M},\til{g})$ is the pair $(S_{\til{g}},\ell_{\til{g}}|_{\pl_-S\til{M}})$, where 
$\ell_{\til{g}}=\ell_g\circ\pi$.

The following result is well known to the experts. 
\begin{lemm}\label{geodesics}
Let $(M,g)$ be a smooth manifold with smooth, strictly convex boundary and no conjugate points.  Let $\alpha$ be a geodesic with endpoints $x,x'\in M$.
If $\gamma$ is any other smooth curve in $M$ with endpoints $x,x'$ that is homotopic to 
$\alpha$ with a homotopy fixing the endpoints, then the length of $\gamma$  is larger than the length of $\alpha$.
\end{lemm}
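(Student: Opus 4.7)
The plan is to lift to the universal cover $\pi:\til M\to M$, where strict convexity and absence of conjugate points force the geodesic to be the unique length-minimizer in its homotopy class. Using the homotopy $H$ between $\alpha$ and $\gamma$ rel endpoints and the homotopy lifting property, I obtain lifts $\til\alpha,\til\gamma$ sharing common endpoints $\til x,\til x'\in\til M$, with $\til\alpha$ still a geodesic and $\mathrm{length}(\til\alpha)=\mathrm{length}(\alpha)$, $\mathrm{length}(\til\gamma)=\mathrm{length}(\gamma)$. Thus it suffices to prove the strict inequality in $\til M$.

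To do this I parametrize $\til\alpha$ on $[0,1]$ as $\til\alpha(t)=\exp_{\til x}(tv_0)$ with $|v_0|=\mathrm{length}(\til\alpha)$, and let $\mc D\subset T_{\til x}\til M$ denote the open set of vectors $v$ whose geodesic $t\mapsto\exp_{\til x}(tv)$ stays in $\til M$ for all $t\in[0,1]$. Strict convexity of $\pl M$ (which lifts to $\pl\til M$) makes $\mc D$ star-shaped about the origin, and the no-conjugate-points hypothesis implies that $\exp_{\til x}:\mc D\to\til M$ is a local diffeomorphism. I then lift the homotopy $\til H:[0,1]^2\to\til M$ through $\exp_{\til x}$ to a continuous map $\hat H:[0,1]^2\to\mc D$ satisfying $\exp_{\til x}\circ\hat H=\til H$, $\hat H(s,0)=0$, and $\hat H(0,t)=tv_0$, by exploiting the simple connectedness of $[0,1]^2$ and continuing the initial radial lift via the local-diffeomorphism property along each path $\til H(s,\cdot)$. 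Since $\til H(s,1)\equiv\til x'$ and $\exp_{\til x}^{-1}(\til x')\cap\mc D$ is discrete, continuity of $\hat H$ forces $\hat H(s,1)=v_0$ for all $s\in[0,1]$; in particular $\hat H(1,\cdot)$ is a path in $T_{\til x}\til M$ from $0$ to $v_0$ with $\exp_{\til x}\circ\hat H(1,\cdot)=\til\gamma$.

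I then invoke the Gauss lemma at each point where $\exp_{\til x}$ is a local diffeomorphism: the length of $\til\gamma=\exp_{\til x}\circ\hat H(1,\cdot)$ is at least $|v_0|=\mathrm{length}(\til\alpha)$, with equality if and only if $\hat H(1,\cdot)$ is the radial segment $t\mapsto tv_0$, in which case $\til\gamma=\til\alpha$ and hence $\gamma=\alpha$, contradicting the hypothesis. The step I expect to require the most care is the construction of the global lift $\hat H$: one has to check that path-continuation of the lift never runs into a conjugate point (which the hypothesis rules out along the specific geodesics involved) and never escapes $\mc D$ (ensured because $\exp_{\til x}(\hat H(s,t))=\til H(s,t)\in\til M$ and $\pl M$ is strictly convex, so the corresponding geodesics do remain in $\til M$ up to time $1$).
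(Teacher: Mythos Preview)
Your approach has a genuine gap in the lifting step, and it is precisely the point where the real content of the lemma lives. You correctly observe that $\exp_{\til x}:\mc D\to\til M$ is a local diffeomorphism (no conjugate points) and you argue that the lift $\hat H$ cannot run into $\partial\mc D$ (strict convexity forces $\exp_{\til x}(\partial\mc D)\subset\partial\til M$, while one can arrange the homotopy to stay in $\til M^\circ$). What you do not address is the possibility that $|\hat H(s,t)|\to\infty$: a local diffeomorphism from an open star-shaped set in $T_{\til x}\til M$ need not have the path-lifting property, because the lift of a path can escape to infinity in finite time. Concretely, nothing you have written rules out a sequence of vectors $v_n\in\mc D$ with $|v_n|\to\infty$ and $\exp_{\til x}(v_n)$ remaining in the compact set $\til H([0,1]^2)$; equivalently, arbitrarily long $\til g$-geodesics from $\til x$ ending in a fixed compact region. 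Your sentence ``ensured because $\exp_{\til x}(\hat H(s,t))=\til H(s,t)\in\til M$ and $\partial M$ is strictly convex'' only controls the $\partial\mc D$-direction and says nothing about this unboundedness.

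Ruling out escape to infinity is tantamount to showing that $\exp_{\til x}:\mc D\to\til M$ is a covering map, hence a diffeomorphism onto the simply connected $\til M$; but that is exactly the assertion that geodesics between two points of $\til M$ are unique and minimizing, i.e.\ the lemma itself. (In the boundaryless case one gets this from completeness of the pull-back metric $\exp_{\til x}^*\til g$, but here geodesics terminate on $\partial\til M$ and that completeness argument breaks down.) The paper avoids this circularity by a Morse-theoretic argument: on (a finite-dimensional reduction of) the path space with the energy functional $E$, strict convexity forces minimizers to be interior geodesics, and the Morse index theorem together with the no-conjugate-points hypothesis makes every critical point a strict local minimum. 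If there were two such minima, a mountain-pass argument would produce a saddle-type geodesic carrying a pair of conjugate points, a contradiction. That mountain-pass step is doing the work that your unproven global lift would have to do.
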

\begin{proof}
Let $\Pi:=\{\gamma\in W^{1,2}([0,1]; M)\ |\ \gamma(0)=x, \gamma(1)=x'\}$, where $W^{1,2}([0,1]; M)$ is the 
Hilbert manifold of absolutely continuous maps $\gamma: [0,1]\to M$ whose energy 
\[E(\gamma):=\int_{0}^1 g_{\gamma(t)}(\dot{\gamma}(t),\dot{\gamma}(t))\,dt\] is finite. We also consider the finite dimensional reduction of this space, that is, the space $\Pi_k$ of all curves $\gamma\in\Pi$ such that, for all $i=0,...,k-1$, the curve $\gamma|_{[i/k,(i+1)/k]}$ is a geodesic of length smaller than $\mathrm{injrad}(M,g)$. For all $a>0$ there exists $k\in\nn$ large enough so that the inclusion 
\begin{equation}\label{homot_equiv}
 \Pi_k\cap E^{-1}(-\infty,a)\hookrightarrow E^{-1}(-\infty,a)
\end{equation}
is a homotopy equivalence. We refer the reader to Milnor \cite[Section~16]{Milnor:1963wm} for this and other properties of the space $\Pi_k$.
The connected components of $\Pi$ are homotopy classes of curves in $M$ with fixed endpoints $x$ and $x'$. We choose a connected component $\Pi'\subset\Pi$ containing a geodesic $\alpha$. 
First, we notice that a curve $\alpha'$ which minimizes  the energy $E$ in $\Pi'$ is in fact a smooth geodesic that intersects $\pl M$ only at its endpoints $x,x'$. This follows from the strict convexity of the boundary  (see for instance the arguments given in \cite[Section 1]{Otal:1990zh}). In particular, the minimizers of $E|_{\Pi'}$ lie in the interior of $\Pi'$. The critical points of the energy $E|_{\Pi'}$ are the geodesics in the connected component $\Pi'$. Since $g$ has no conjugate points, the Morse index Theorem implies that the Hessian of $E$ at the critical points is positive definite (see for instance Milnor \cite[Section 15]{Milnor:1963wm}). The same is true for the restriction of $E$ to $\Pi_k$. We will use a mountain pass argument to prove that $\alpha$ is the unique minimizer in $\Pi'$. 
If $\alpha'$ is another local minimizer $E|_{\Pi'}$, consider a continuous path $A:[0,1]\to\Pi$ joining $\alpha$ and $\alpha'$. We set $a>\max\{E(A(r))\ |\ r\in[0,1]\}$, and choose $k\in\nn$ large enough so that the inclusion \eqref{homot_equiv} is a homotopy equivalence. In particular $\alpha$ and $\alpha'$ belong to the same connected component of $\Pi_k\cap E^{-1}(-\infty,a)$. We denote by $E_k$ the restriction of $E$ to $\Pi_k\cap E^{-1}(-\infty,a)$, and by $\mathcal{P}$ the space of continuous paths $B:[0,1]\to\Pi_k\cap E^{-1}(-\infty,a)$ with $B(0)=\alpha$ and $B(1)=\alpha'$. Since the boundary of $M$ is strictly convex, a well known computation (see \cite[page~252]{Gromoll:1975jb}) shows that the flow of $-\nabla E_k$ is positively complete. Therefore, the minimax value 
\begin{align*}
b:= \inf_{B\in\mathcal{P}} \max_{s\in[0,1]} E_k(B(s))
\end{align*}
is a critical value of $E_k$. By a theorem of Hofer \cite{Hofer:1985kr}, there exists a critical point $\alpha''$ with $E_k(\alpha'')=b$ that is not a local minimum of $E_k$. By the Morse index Theorem, $\alpha''$ must contain a pair of conjugate points. This gives a contradiction.
\end{proof}

We denote by $\mc{P} \to M\x M$ the bundle whose fiber $\mc{P}_{(x,x')}$ over $(x,x')\in M\times M$ is the set homotopy classes of curves $\gamma:[0,1]\to M$ such that $\gamma(0)=x$ and $\gamma(1)=x'$. The fundamental group $\pi_1(M,x)$ acts freely and transitively on every fiber by concatenation: given a homotopy class $[\gamma]\in\mc{P}_{(x,x')}$ and an element $[\zeta]\in\pi_1(M,x)$, the action is given by $[\zeta]\cdot[\gamma]=[\zeta*\gamma]$. Therefore, the $\mc{P}_{(x,x')}$ is in one-to-one correspondence with $\pi_1(M,x)$. We  define the \emph{marked distance} function
\[ d_g : \mc{P} \to [0,\infty),\quad d_g(x,x';[\gamma])={\rm length}(\alpha_{g}(x,x';[\gamma])),\]
where $\alpha_{g}(x,x';[\gamma])$ is the unique geodesic with endpoints $x$ and $x'$ 
in the homotopy class $[\gamma]\in\mc{P}_{(x,x')}$. We denote by $\mc{P}_{\pl M}$ the restriction of the bundle $\mc{P}$ to $\pl M\x \pl M$.

A consequence of Lemma \ref{geodesics} is that for each pair of points $x,x'$ on $\til{M}$, there is a unique geodesic $\alpha_{\til g}(x,x')$
joining $x$ and  $x'$ in $\til{M}$. We define the distance function
\begin{equation}\label{distancecover}
d_{\til{g}}: \til{M}\x \til{M} \to [0,\infty) ,\qquad d_{\til{g}}(x,x')={\rm length}(\alpha_{\til{g}}(x,x')).
\end{equation}
Notice that
\[ d_{\til{g}}(x,x')=d_{g}(\pi(x),\pi(x');[\pi\circ\alpha_{\til{g}}(x,x')]).\]
We call \emph{marked boundary distance} the restriction $d_g|_{\mc{P}_{\pl M}}$, 
the knowledge of which is equivalent to the knowledge of $d_{\til{g}}|_{\pl\til{M}\x\pl\til{M}}$.

The next two lemmas are well known for simple metrics. Since our assumptions are weaker, we provide the proofs for the reader's convenience.

\begin{lemm}\label{bdrydistgivemet}
Let $M$ be a compact manifold with smooth boundary, and $g_1,g_2$ two Riemannian metrics on $M$ with the same boundary distance function (i.e.\ $d_{g_1}=d_{g_2}$ on $\pl M\x \pl M$), and such that the boundary $\partial M$ is convex with respect to both metrics. Then there exists a diffeomorphism $\psi:M\to M$ that is the identity on $\partial M$ and such that $g_2$ and $\psi^*g_1$ coincide at all points of $\partial M$.
\end{lemm}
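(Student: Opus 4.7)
The plan is to (i) recover the induced boundary metrics from the boundary distance functions, (ii) match the full metrics along $\pl M$ by means of boundary normal coordinates, and (iii) smoothly extend the resulting collar diffeomorphism globally.

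For step (i), I would show that $h_i:=g_i|_{T\pl M}$ satisfies $h_1=h_2$. Given $x\in\pl M$ and $v\in T_x\pl M$, pick a smooth $\pl M$-curve $c$ with $c(0)=x$ and $\dot c(0)=v$. In a local chart around $x$ where $g_i$ is close to its frozen value $g_i(x)$, the standard asymptotic expansion of the Riemannian distance gives
\[ d_{g_i}(c(0),c(t))=|v|_{g_i}\,|t|+O(t^2),\qquad t\to 0,\]
so the equality $d_{g_1}=d_{g_2}$ on $\pl M\x \pl M$ forces $|v|_{g_1}=|v|_{g_2}$, and hence $h_1=h_2=:h$ by polarization.

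For step (ii), let $\nu_i$ denote the inward $g_i$-unit normal to $\pl M$ and consider the normal exponential maps
\[ E_i:\pl M\x[0,\eps)\to M,\qquad E_i(x,t):=\exp_x^{g_i}(t\nu_i(x));\]
for $\eps>0$ small, these are diffeomorphisms onto collar neighborhoods $U_i$ of $\pl M$, and the Gauss lemma gives $E_i^*g_i=dt^2+\hat h_i(x,t)$ with $\hat h_i(x,0)=h$. Setting $\psi_0:=E_1\circ E_2^{-1}:U_2\to U_1$, one immediately has $\psi_0|_{\pl M}=\mathrm{Id}$, and reading $\psi_0^*g_1$ and $g_2$ through the chart supplied by $E_2$ at $t=0$ shows that $(\psi_0^*g_1)_x=(g_2)_x=dt^2+h$ on $T_xM$ for every $x\in\pl M$.

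For step (iii), fix an ambient tubular chart $\Phi:\pl M\x[0,\eps)\to M$ and a cutoff $\chi\in C^\infty([0,\eps))$ equal to $1$ near $0$ and vanishing near $\eps$. Expressing $\psi_0$ in this chart as $\Psi_0$, the interpolation $\Psi(x,t):=(x,t)+\chi(t)(\Psi_0(x,t)-(x,t))$ equals $\Psi_0$ near $\pl M$, equals the identity near $t=\eps$, and is $C^1$-close to the identity because $\Psi_0-\mathrm{Id}$ vanishes on $\pl M\x\{0\}$; hence $\Psi$ is a diffeomorphism. Transporting $\Psi$ back via $\Phi$ and extending by the identity off the collar yields the required global $\psi$. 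The most delicate point is the asymptotic identification of $h_1$ and $h_2$ in step (i); once boundary normal coordinates are available, steps (ii) and (iii) are routine applications of the Gauss lemma and a cutoff argument.
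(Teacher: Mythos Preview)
Your steps (i) and (ii) match the paper's argument essentially verbatim and are correct. The gap is in step (iii).

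A minor point first: the formula $\Psi(x,t)=(x,t)+\chi(t)\bigl(\Psi_0(x,t)-(x,t)\bigr)$ is not well defined as written, since the first component lives in $\pl M$, which is not a vector space. This can be patched in charts, but needs to be said.

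The substantive issue is the claim that $\Psi$ is $C^1$-close to the identity ``because $\Psi_0-\mathrm{Id}$ vanishes on $\pl M\times\{0\}$.'' That vanishing only gives $\Psi_0(x,t)-(x,t)=O(t)$ in $C^0$; it says nothing about $d\Psi_0-\mathrm{Id}$, which in general is \emph{not} small at $t=0$. Indeed, taking $\Phi=E_2$ one finds that $d\Psi_0|_{(x,0)}$ is the identity on $T_x\pl M$ but sends $\pl_t$ to the vector with $g_2$-tangential part $\nu_1-g_2(\nu_1,\nu_2)\nu_2$ and normal part $g_2(\nu_1,\nu_2)$; this differs from $\pl_t$ whenever $\nu_1\neq\nu_2$. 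A short computation then shows that for small $t$ the Jacobian determinant of your $\Psi$ is approximately
\[
1+\bigl(\chi(t)+t\chi'(t)\bigr)\bigl(g_2(\nu_1,\nu_2)-1\bigr),
\]
and since $\chi+t\chi'=(t\chi)'$ necessarily takes negative values of order one in the transition region (for a generic cutoff with support comparable to its width), this can vanish or change sign once $g_2(\nu_1,\nu_2)$ is far from $1$. Nothing in the hypotheses rules this out: only the tangential parts of $g_1,g_2$ agree on $\pl M$.

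The paper avoids this by writing $\theta=\phi_1^{-1}\circ\phi_2=(\theta_1,\theta_2)$ and modifying the two components \emph{separately}: the normal component $\theta_2$ is damped to $t$ while keeping $\pl_t\theta_2'>0$, and the tangential component is damped via $\theta_1'(x,t)=\theta_1(x,\chi(t))$, using that $x\mapsto\theta_1(x,s)$ is a diffeomorphism of $\pl M$ for each fixed small $s$. This component-wise construction is exactly what your linear interpolation is missing; once you replace step (iii) with it, the rest of your argument goes through.
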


\begin{proof}
Let $\iota:\partial M\hookrightarrow M$ be the inclusion map of the boundary inside our compact manifold. We set $h_1:=\iota^*g_1$ and $h_2:=\iota^*g_2$. We claim that these two Riemannian metrics coincide. Indeed, since the boundary $\partial M$ is convex for both $g_1$ and $g_2$, their Riemannian distances are smooth in a neighborhood of the diagonal submanifold of $M\times M$. For any given $(x,v)\in T\partial M$, if $\gamma:[0,1]\to\partial M$ is any smooth curve such that $\gamma(0)=x$ and $\dot\gamma(0)=v$, we have 
\begin{align*}
h_1(v,v)^{1/2}
= \lim_{t\to0} \frac{d_{g_1}(\gamma(0),\gamma(t))}{t}
= \lim_{t\to0} \frac{d_{g_2}(\gamma(0),\gamma(t))}{t}
=
h_2(v,v)^{1/2}.
\end{align*}

Now, for each $x\in\partial M$, we denote by $\nu_1(x),\nu_2(x)\in T_x M$ the inward-pointing unit normal tangent vectors with respect to $g_1$ and $g_2$ respectively. For $i=1,2$, we introduce the map
\begin{align*}
\phi_i:\partial M\times[0,\epsilon_i)\to M,\qquad \phi_i(x,t)=\exp_x(t\nu_i(x)), 
\end{align*}
which, up to choosing $\epsilon_i>0$ small enough, is a well defined diffeomorphism onto a collar neighborhood of the boundary $\partial M$. For $\delta_0>0$ small enough, the map 
\begin{align*}
\theta:\partial M\times[0,\delta_0)\to\partial M\times[0,\epsilon_1),\qquad
\theta(x,t)=\phi_1^{-1}\circ\phi_2(x,t)
\end{align*}
is a well defined diffeomorphism onto its image. We write this diffeomorphism as $\theta(x,t)=(\theta_1(x,t),\theta_2(x,t))$, where $\theta_1(x,t)\in\partial M$ and $\theta_2(x,t)\in[0,\epsilon_1)$. Up to reducing $\delta_0>0$, we have that $\tfrac{\partial}{\partial t}\theta_2$ is everywhere positive, and for all $t\in[0,\delta_0)$ the map $x\mapsto\theta_1(x,t)$ is a diffeomorphism onto its image. For $\delta_1\in[0,\delta_0/2)$ sufficiently small, we can find a smooth function $\theta_2':\partial M\times[0,\delta_0)\to[0,\delta_0)$ such that $\tfrac{\partial}{\partial t}\theta_2'$ is everywhere positive, $\theta_2'(\cdot,t)=\theta_2(\cdot,t)$ for all $t\in[0,\delta_1]$, and $\theta_2'(\cdot,t)\equiv t$ for all $t\in[\delta_0/2,\delta_0)$. Let $\delta_2\in(0,\delta_1/4)$ be a constant that we will fix sufficiently small in a moment. We choose a smooth function $\chi:[0,\delta_0)\to[0,\delta_0)$  such that $\chi(t)=t$ for all $t\in[0,\delta_2]$, $\chi(t)=0$ for all $t\in[3\delta_2,\delta_0]$, and $|\dot\chi(t)|\leq 1$ for all $t\in[0,\delta_0)$. We define the map
\begin{align*}
\theta_1':\partial M\times[0,\delta_0)\to \partial M,\qquad \theta_1'(x,t)=\theta_1(x,\chi(t)).
\end{align*}
Notice that, for all $t\in[0,\delta_0)$, the map $x\mapsto\theta_1'(x,t)$ is a diffeomorphism onto its image, and such diffeomorphism is the identity if $t=0$ or if $t\in[3\delta_2,\delta_0]$. We define
\begin{align*}
\theta':\partial M\times[0,\delta_0)\to \partial M\times[0,\delta_0),\qquad \theta'(x,t)=(\theta_1'(x,t),\theta_2'(x,t)).
\end{align*}
This map is clearly bijective and, up to choosing $\delta_2>0$ sufficiently small, its differential is everywhere bijective. Therefore, $\theta'$ is a diffeomorphism that coincides with $\theta$ on $\partial M\times[0,\delta_2)$, and with the identity on $\partial M\times(\delta_0/2,\delta_0)$. Finally, we set
\begin{align*}
\phi_1':\partial M\times[0,\delta_0)\to M,\qquad \phi_1'(x,t)=\phi_1\circ\theta'(x,t). 
\end{align*}
Notice that $\phi_1'$ is a diffeomorphism onto its image, coincides with $\phi_2$ on $\partial M\times[0,\delta_2)$, and coincides with $\phi_1$ on $\partial M\times(\delta_0/2,\delta_0)$. We set $U:=\phi_1'(\partial M\times[0,\delta))$ and $\psi:=\phi_1\circ (\phi_1')^{-1}:U\to M$.
Since $\psi$ is equal to the identity outside a neighborhood of $\partial M$, we can extend it to a diffeomorphism 
$\psi:M\to M$
by setting $\psi(x)=x$ for all $x\not\in U$. 

We claim that $\psi^*g_1$ coincides with $g_2$ on all points $x\in\partial M$. Indeed, $\psi$ fixes the boundary $\partial M$, and therefore 
\begin{align}\label{iota_psi_g2}
\iota^*\psi^*g_1=\iota^*g_1=\iota^*g_2.
\end{align}
Moreover $\psi$ coincides with $\phi_1\circ \phi_2^{-1}$ on a neighborhood of $\partial M$. Fix a point $x\in\partial M$, and consider the unit-speed $g_2$-geodesic $\gamma_2:[0,\epsilon)\to U$ such that $\gamma_2(0)=x$ and $\dot\gamma_2(0)=\nu_2(x)$. We set $\gamma_1:=\psi\circ\gamma_2$. Notice that, if $\epsilon\in(0,\delta')$, we have $\gamma_1(t)=\phi_1(x,t)$ and $\gamma_2(t)=\phi_2(x,t)$. Therefore the curve $\gamma_1$ is the unit-speed $g_1$-geodesic such that $\gamma_1(0)=x$ and $\dot\gamma_1(0)=\nu_1(x)$, and in particular $d\psi(x)\nu_2(x)=\nu_1(x)$. This implies that, for all $v\in T_x(\partial M)$, we have
\begin{align*}
\psi^*g_1(\nu_2(x),v)=g_1(d\psi(x)\nu_2(x),d\psi(x)v)=g_1(\nu_1(x),v)=0,
\end{align*}
which, together with \eqref{iota_psi_g2}, completes the proof.
\end{proof}

\begin{lemm}\label{lensvsdistance}
Let $M$ be a compact manifold with boundary, and $g_1,g_2$ two Riemannian metrics on $M$ with no conjugate points and such that the boundary $\partial M$ is strictly convex with respect to both metrics. Assume that $g_1$ and $g_2$ coincide at all points of $\pl M$. Then, they have the same marked boundary distance function, i.e.\ $d_{\til{g}_1}=d_{\til{g}_2}$ on $\pl \til{M}\x \pl\til{M}$, if and only if their lens data on $\til{M}$ 
agree, i.e.\ $(S_{\til{g}_1},\ell_{\til{g}_1}|_{\pl_-S\til{M}})=(S_{\til{g}_2},\ell_{\til{g}_2}|_{\pl_-S\til{M}})$.
\end{lemm}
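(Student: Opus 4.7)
The plan is to establish the two implications separately, with the substantial work lying in the direction from marked distance to lens data. For the implication from lens data to marked boundary distance, I would use that $\til{M}$ is simply connected and that, by the consequence of Lemma~\ref{geodesics} recalled in the text, any two points of $\til M$ are joined by a unique geodesic. For $x,x'\in\pl\til M$, strict convexity of $\pl\til M$ makes this geodesic transverse to the boundary at its endpoints, so it corresponds to a unique $v\in\pl_-S_x\til M\setminus \til\Gamma_-$ with $\pi(S_{\til g}(x,v))=x'$ and $d_{\til g}(x,x')=\ell_{\til g}(x,v)$. Thus the lens data recover $d_{\til g}|_{\pl\til M\x \pl\til M}$.

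For the converse, fix $(x,v)\in\pl_-S\til M$ with $T_1:=\ell_{\til g_1}(x,v)<\infty$ and set $x':=\pi(S_{\til g_1}(x,v))$. Lemma~\ref{geodesics} makes the corresponding $g_1$-geodesic minimizing, so $d_{\til g_1}(x,x')=T_1$, and the hypothesis gives $d_{\til g_2}(x,x')=T_1$. This common distance is realized by the unique $g_2$-geodesic $\beta$ from $x$ to $x'$, which by strict convexity of $\pl\til M$ for $g_2$ stays in $\til M^\circ$ except at its endpoints; hence its initial velocity $v_\beta$ lies in $\pl_-S_x\til M$. The core of the argument is a first variation computation: since there are no conjugate points along the geodesics from $x$ to $x'$, the implicit function theorem shows that $F_i(y):=d_{\til g_i}(y,x')$ is smooth for $y$ near $x$ on $\pl\til M$, and for $w\in T_x\pl\til M$,
\[dF_1(x)\cdot w=-g_1(v,w),\qquad dF_2(x)\cdot w=-g_2(v_\beta,w).\]
Using $F_1=F_2$ together with $g_1=g_2$ at $x\in\pl M$, this forces $v-v_\beta$ to be normal to $T_x\pl\til M$; since $v$ and $v_\beta$ are both unit inward-pointing vectors at $x$, they must coincide. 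Consequently $\beta$ is the $g_2$-geodesic emanating from $(x,v)$, yielding $\ell_{\til g_2}(x,v)=T_1$ and $\pi(S_{\til g_2}(x,v))=x'$. A symmetric first variation applied to $y\mapsto d_{\til g_i}(x,y)$ at $y=x'$ matches the $T_{x'}\pl\til M$-tangential components of the two exit vectors; since both are unit outward-pointing, this yields $S_{\til g_1}(x,v)=S_{\til g_2}(x,v)$.

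It remains to handle trapped initial data: if $(x,v)\in\til\Gamma_-^{g_1}$ and $\ell_{\til g_2}(x,v)$ were finite, running the previous argument with the roles of $g_1$ and $g_2$ interchanged would produce a finite $\ell_{\til g_1}(x,v)$, a contradiction. Hence $\til\Gamma_-^{g_1}=\til\Gamma_-^{g_2}$, so $\ell_{\til g_1}=\ell_{\til g_2}$ on $\pl_-S\til M$ and the scattering maps share a common domain on which they are equal. The main delicate point is justifying the first variation near the boundary: one must check that $\beta$ meets $\pl\til M$ only at its endpoints (a consequence of strict convexity combined with the minimizing property of $\beta$) and that $d_{\til g_i}(\cdot,x')$ is smooth near $x$, which follows from the no-conjugate-points hypothesis via the implicit function theorem applied to the exponential map at $x'$.
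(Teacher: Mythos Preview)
Your proof is correct and follows essentially the same approach as the paper: both directions rest on the uniqueness of geodesics in $\til M$ (Lemma~\ref{geodesics}) and the first variation/Gauss lemma identification of the gradient of $d_{\til g_i}(x,\cdot)$ at a boundary point with the geodesic's unit velocity, together with the fact that a unit inward- or outward-pointing vector is determined by its tangential component. The paper phrases this via $\nabla^{\til g_i}\zeta_i(x')=v_i$ and its boundary projection, which is the same computation you call ``first variation''; your additional verification that $\til\Gamma_-^{g_1}=\til\Gamma_-^{g_2}$ (so that the scattering maps have the same domain) is a point the paper leaves implicit.
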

\begin{proof}
Since $g_1$ and $g_2$ are metrics without conjugate points and the boundary $\partial M$ is strictly convex for both of them, any pair of distinct points $x,x'\in \pl\til{M}$ is joined by a unique $\til{g}_1$-geodesic $\alpha_{\til{g}_1}(x,x')$ and by a unique $\til{g}_2$-geodesic $\alpha_{\til{g}_2}(x,x')$. For all distinct points $x,x'\in \pl\til{M}$ we have $d_{\til{g}_1}(x,x')=\ell_{\til{g}_1}(x,v)$, where 
$v$ is the unit tangent vector to $\alpha_{\til{g}_1}(x,x')$ at $x$. If $g_1$ and $g_2$ have  the same lens data on $\til{M}$, we have $S_{\til{g}_2}(x,v)\in S_{x'}\til{M}$ and therefore \[d_{\til{g}_2}(x,x')=\ell_{\til{g}_2}(x,v)=\ell_{\til{g}_1}(x,v)=d_{\til{g}_1}(x,x').\]
Conversely, assume that $d_{\til{g}_1}=d_{\til{g}_2}$ on $\pl\til{M}$. All we need to show is that $\til{g}_1$ and $\til{g}_2$ have the same  scattering maps.  Consider a point $(x,v)\in \pl SM$ that does not belong to the incoming tail of $\til{g}_1$, and set $(x',v_1):=S_{\til{g}_1}(x,v)$. Let $v_2$ be the unit tangent vector to $\alpha_{\til{g}_2}(x,x')$ at $x'$. For $i=1,2$, we define the smooth functions $\zeta_i:=d_{\til{g}_i}(x,\cdot)$ and $\beta_i:=\zeta_i|_{\pl \til{M}}$. We denote by $\nabla^{\til{g}_i}$ the gradient operator with respect to the Riemannian metric $\til{g_i}$, and, by abuse of notation, also the gradient operator with respect to the Riemannian metric induced by $\til{g_i}$ on $\partial\til{M}$. By our assumptions, $\beta_1=\beta_2$ and therefore $\nabla^{\til{g}_1}\beta_1=\nabla^{\til{g}_2}\beta_2$. By Gauss Lemma, $\nabla^{\til{g}_i}\zeta_i(x')=v_i$. Since $\nabla^{\til{g}_i}\beta_i$ is the $\til g_i$-orthogonal projection of $\nabla^{\til{g}_i}\zeta_i$ onto
$T(\pl\til{M})$, and since $v_1,v_2\in T_{x'}\til M$ are outward-pointing unit tangent vectors, we conclude that $v_1=\nabla^{\til{g}_1}\zeta_1(x')=\nabla^{\til{g}_2}\zeta_2(x')=v_2$. Switching the roles of $x$ and $x'$, the same argument shows that the tangent vector to $\alpha_{\til{g}_2}(x,x')$ at $x$ is $v$, and therefore $S_{\til{g}_1}(x,v)=S_{\til{g}_2}(x,v)$. 
\end{proof}

By applying an argument due to Croke \cite[Theorem C]{Croke:1991le} we get the following result.
\begin{lemm}\label{croke}
Let $M$ be a compact surface with smooth boundary, and $g_1,g_2$ two Riemannian metrics on $M$ without conjugate points, such that the boundary $\partial M$ is strictly convex for each one of them, and $g_2=e^{2\omega}g_1$ for some $\omega\in C^\infty(M)$ vanishing at $\pl M$. Assume that the trapped sets of the geodesic flows of $g_1$ and $g_2$ have zero measure for $\mu_{g_1}$ and $\mu_{g_2}$ respectively. If $g_1$ and $g_2$ have the same marked boundary distance function, then $g_1=g_2$.
\end{lemm}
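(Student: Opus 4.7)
The plan is to follow the strategy of Croke's proof of conformal boundary rigidity for simple surfaces (\cite[Theorem~C]{Croke:1991le}), the only novelty being that one must compare geodesics within individual homotopy classes rather than absolute minimizers, which is exactly what the marked boundary distance hypothesis (combined with Lemma~\ref{geodesics}) provides.

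First I would observe that since $\omega|_{\pl M}=0$, the metrics $g_1$ and $g_2$ coincide on $T_{\pl M}M$, so $\pl_\pm SM$ and the boundary measure $d\mu_{g,\nu}$ of~\eqref{dmunu} are the same for both metrics; call this common measure $d\mu_\nu$. The trapped set hypothesis together with~\eqref{muK} ensures that Santal\'o's formula~\eqref{santalo} is valid for both $g_1$ and $g_2$. The key inequality comes from the homotopy-class minimizing property of Lemma~\ref{geodesics}: for $(x,v)\in\pl_-SM$ outside the incoming tail of $g_1$, let $\gamma_1$ be the $g_1$-geodesic starting at $(x,v)$ with exit point $x'$. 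Its $g_2$-length is
\[
L_{g_2}(\gamma_1)=\int_0^{\ell_{g_1}(x,v)} e^{\omega(\gamma_1(t))}\,dt,
\]
and by Lemma~\ref{geodesics} this is bounded below by $d_{g_2}(x,x';[\gamma_1])$, which equals $d_{g_1}(x,x';[\gamma_1])=\ell_{g_1}(x,v)$ by the marked boundary distance hypothesis. Integrating this pointwise inequality against $d\mu_\nu$ and using Santal\'o~\eqref{santalo} for $g_1$, together with the identity $\int_{SM} f\circ\pi \,d\mu_g = 2\pi\int_M f\,dA_g$ valid on a surface, yields
\[
\int_M e^\omega\,dA_{g_1}\ \geq\ A_1:=\mathrm{Area}_{g_1}(M).
\]

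Swapping the roles of $g_1$ and $g_2$, a $g_2$-geodesic $\gamma_2$ has $g_1$-length $\int_0^{\ell_{g_2}(x,v)} e^{-\omega(\gamma_2(t))}\,dt$, which is again bounded below by the marked distance $d_{g_1}=d_{g_2}$, and therefore by $\ell_{g_2}(x,v)$. Applying Santal\'o for $g_2$ and using $dA_{g_2}=e^{2\omega}dA_{g_1}$ gives
\[
\int_M e^\omega\,dA_{g_1}\ \geq\ \int_M e^{2\omega}\,dA_{g_1}.
\]
Now Cauchy--Schwarz with $f=e^\omega$, $g=1$ yields $\bigl(\int e^\omega\,dA_{g_1}\bigr)^2\leq A_1\int e^{2\omega}\,dA_{g_1}$. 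Chaining this with the two inequalities above forces equality throughout, and the equality case of Cauchy--Schwarz forces $e^\omega$ to be constant; the normalization $\int e^\omega dA_{g_1}=A_1$ then gives $e^\omega\equiv1$, i.e.\ $\omega\equiv 0$.

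The main obstacle is a bookkeeping one rather than a conceptual one: one needs to check carefully that the hypotheses allow applying Santal\'o's formula on both sides (which is ensured by the zero-measure trapped set assumption via~\eqref{muK}) and that the marked boundary distance, rather than the boundary distance, is what correctly relates the two length integrals in each homotopy class; this is precisely where Lemma~\ref{geodesics} is used in place of the usual global minimization argument of Croke. No additional analytic input is required beyond Cauchy--Schwarz.
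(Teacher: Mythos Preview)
Your proof is correct and follows essentially the same strategy as the paper: Santal\'o's formula, the homotopy-class minimization of Lemma~\ref{geodesics}, and Cauchy--Schwarz/H\"older. The only difference is in how the volume equality is obtained. The paper first invokes Lemma~\ref{lensvsdistance} to conclude $\ell_{g_1}=\ell_{g_2}$ on $\partial_-SM$, hence $A_1=A_2$ directly from~\eqref{Volume}, and then needs only the single inequality $\int_M e^{\omega}\,dA_{g_1}\geq A_1=A_2$ together with H\"older. You instead run the minimization argument symmetrically (swapping $g_1$ and $g_2$) to get the second inequality $\int_M e^{\omega}\,dA_{g_1}\geq \int_M e^{2\omega}\,dA_{g_1}$, which when chained with Cauchy--Schwarz and the first inequality squeezes everything to equality. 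Your route is slightly more self-contained in that it avoids the lens-data lemma; the paper's route is marginally shorter once that lemma is in hand. Both are faithful to Croke's original argument.
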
  

\begin{proof}
Notice that $g_1$ and $g_2$ coincides on $\partial M$, and they both define the same inward-pointing unit vector field $\nu$ and the same measure $\mu_{g_1,\nu}=\mu_{g_2,\nu}$ on $\partial M$. We denote by $\varphi^i_t$ the geodesic flow of the metric $g_i$. 
Since $d_{\til g_1}=d_{\til g_2}$, Lemma \ref{lensvsdistance} implies that $(S_{\til{g}_1},\ell_{\til{g}_1}|_{\pl_-S\til{M}})=(S_{\til{g}_2},\ell_{\til{g}_2}|_{\pl_-S\til{M}})$. In particular, $S_{g_1}=S_{g_2}$ and $\ell_{g_1}|_{\pl_-SM}=\ell_{g_2}|_{\pl_-SM}$. We denote by $\Gamma^i_{\pm}$ the incoming/outgoing tails of the geodesic flow of $g_i$. Notice that $\Gamma^1_{\pm}\cap\partial_-SM=\Gamma^2_{\pm}\cap\partial_-SM$.  By~\eqref{muK}, these tails have measure zero with respect to the corresponding Liouville measures. Therefore, Santal\'o's formula implies
\begin{equation}
\label{santalo_same_volume}
\begin{split}
{\rm Vol}_{g_1}(M) &= \frac{1}{{\rm Vol}(S^{n-1})} \int_{\partial_-SM\setminus\Gamma_-^1} \ell_{g_1}(x,v) d\mu_{g_1,\nu}(x,v) \\
&= \frac{1}{{\rm Vol}(S^{n-1})} \int_{\partial_-SM\setminus\Gamma_-^2} \ell_{g_2}(x,v) d\mu_{g_2,\nu}(x,v) \\
& ={\rm Vol}_{g_2}(M)
\end{split}
\end{equation}
and
\begin{equation}
\label{santalo_g1_g1}
\begin{split}
{\rm Vol}(S^{n-1})\int_Me^{\omega(x)}d{\rm vol}_{g_1}(x)
&=
\int_{SM} e^{\omega(x)} d\mu_{g_1}(x,v)\\
&=
\int_{SM} |v|_{g_2} d\mu_{g_1}(x,v)\\
&=
\int_{\pl_-SM\setminus\Gamma_-^1}\int_{0}^{\ell_{g_1}(x,v)}
 |\varphi^1_t(x,v)|_{g_2}dt\,  d\mu_{g_1,\nu}(x,v). 
\end{split}
\end{equation}
Consider $(x,v)\in \pl_-SM\setminus\Gamma_-^1$ and, for $i=1,2$, consider the geodesic
\[
\gamma_i:[0,\ell_{g_i}(x,v)]\to M,\qquad (\gamma_i(t),\dot\gamma_i(t))=\varphi_t^i(x,v).
\] 
Since $S_{\til{g}_1}=S_{\til{g}_2}$, we have $\gamma_1(\ell_{g_1}(x,v))=\gamma_2(\ell_{g_2}(x,v))$, and the geodesics $\gamma_1$ and $\gamma_2$ are homotopic via a homotopy that fixes the endpoints. Therefore, by Lemma~\ref{geodesics}, we infer
\begin{align*}
\int_{0}^{\ell_{g_1}(x,v)}|\varphi^1_t(x,v)|_{g_2}dt 
= \mathrm{length}_{g_2}(\gamma_1)
\geq
\mathrm{length}_{g_2}(\gamma_2)
=\ell_{g_2}(x,v)
=\ell_{g_1}(x,v).
\end{align*}
This, together with \eqref{santalo_same_volume} and \eqref{santalo_g1_g1}, implies
\begin{equation}\label{ineq1}
\begin{split}
{\rm Vol}(S^{n-1})\int_Me^{\omega(x)}d{\rm vol}_{g_1}(x)
&\geq
\int_{\pl_-SM\setminus\Gamma_-^1}
\ell_{g_1}(x,v)\,d\mu_{g_1,\nu}(x,v)\\
&= {\rm Vol}(S^{n-1})\,{\rm Vol}_{g_1}(M)\\
&= {\rm Vol}(S^{n-1})\,{\rm Vol}_{g_2}(M). 
\end{split}
\end{equation}
But then the H\"older inequality 
\begin{align*}
{\rm Vol}_{g_2}(M)^{1/2}\,{\rm Vol}_{g_1}(M)^{1/2}\geq \int_{M}e^{\omega(x)}d{\rm vol}_{g_1}(x) 
\end{align*}
is satisfied as an equality, which implies $\omega\equiv 0$.
\end{proof}

\begin{proof}[Proof of Theorem~\ref{Th1}]
Let $g_1$ be a Riemannian metric as in the statement. By Proposition~\ref{stability}, there exists $\delta>0$ such that, if a Riemannian metric $g_2$ on $M$ satisfies $\|g_2-g_1\|_{C^2}<\delta$, then $g_2$ satisfies the same properties as $g_1$ in the statement of Theorem~\ref{Th1}. Assume further that $g_1$ and $g_2$ have the same marked boundary distance. In particular, these metrics have the same boundary distance, and therefore Lemmas \ref{bdrydistgivemet} and \ref{lensvsdistance} imply that they coincide on all points of $\partial M$ and that their scattering maps are the same. By \cite[Theorem~3]{Guillarmou:2014jk}, there exists a diffeomorphism $\phi:M\to M$ which is the identity on $\pl M$ and such that 
$\phi^*g_2=e^{2\eta}g_1$ for some smooth function $\eta\in C^\infty(M)$ satisfying $\eta|_{\pl M}=0$. We want to show that, up to reducing $\delta>0$, we have $\eta\equiv0$.

We claim that, for each $\eps>0$, there is $\delta>0$ small enough such that the diffeomorphism $\phi$ as above satisfies $\|\phi^{-1}-{\rm Id}\|_{C^2(M)}<\eps$. Indeed, if we set $q:=g_1-g_2$, then $(\phi^{-1})^*g_1=e^{-2\eta\circ \phi^{-1}}(g_1-q)$. Therefore, the map $\phi^{-1}:M\to M$ is  quasi-conformal, equal to the identity on the boundary $\partial M$, and with Beltrami coefficient $\mu_0\in C^\infty(M; \kappa^{-1}\otimes \overline{\kappa})$ having norm 
$\|\mu_0\|_{C^2(M)}\leq C\|q\|_{C^2(M)}$ for some $C>0$ depending only on $(M,g_1)$. Here, $\kappa$ denotes the canonical bundle of $(M,g_1)$. 
By \cite[Section 3]{Earle:1970oa}, the conformal structure of the interior of $(M,g_1)$ can be realized as $\Gamma\backslash\hh^2$ for some convex co-compact  Fuchsian group $\Gamma\subset {\rm PSL}(2,\rr)$, where $\hh^2=\{z\in\cc\ |\ {\rm Im}(z)>0\}$ is the hyperbolic space. We denote by $\Omega\subset \rr\cup \{\infty\}$ the set of discontinuity of $\Gamma$. We can assume that $\Omega$ contains  $\{0,1,\infty\}$. The discrete group $\Gamma$ acts properly discontinuously on $\hh^2\cup \Omega$ by holomorphic transformations. The universal cover $\til{M}$ can be identified with $\hh^2\cup\Omega$, and the fundamental group $\pi_1(M)$ can be identified with $\Gamma$. There is a holomorphic covering map $\pi: \hh^2\to M^\circ$, and the pull-back $\pi^*$ is a homeomorphism of the smooth conformal structures on $M$ to the set of $\Gamma$-equivariant smooth Beltrami differentials on $\hh^2\cup\Omega$.
The Beltrami differential $\mu_0$ lifts to a smooth Beltrami differential  on $\hh^2\cup\Omega$ which is $\Gamma$-equivariant. The continuity theorem in \cite[Section 8]{Earle:1970oa} implies that, for each Beltrami differential $\mu$ on $\hh^2\cup\Omega$ with $\|\mu\|_{L^\infty}<1$, there is a unique smooth quasiconformal map $\Phi_{\mu}:\hh^2\cup \Omega\to \hh^2\cup \Omega$ that fixes $\{0,1,\infty\}$,  satisfies the Beltrami equation 
$\pl_{\bar{z}}\Phi_{\mu}=\mu\pl_z \Phi_{\mu}$,
and such that the map $\mu\mapsto \Phi_{\mu}$ is continuous; here, we have equipped the domain of this map with the uniform 
$C^2$-topology on compact sets, and its codomain with the uniform 
$C^2$-topology on compact sets. We recall that $\Phi_{\mu}$ is a solution of the Beltrami equation if and only if it is a conformal transformation of $\hh^2\cup\Omega$ equipped with the conformal structure $|dz+\mu d\bar{z}|^2$ to $\hh^2\cup\Omega$ equipped with the conformal structure induced by $\cc$. By applying this result to the lift $\til{\mu}_0$ of the Beltrami differential $\mu_0$ to $\hh^2\cup \Omega$, we 
see by uniqueness that $\Phi_{\til{\mu}_0}$ is equal to the lift of $\phi^{-1}$ to $\hh^2\cup\Omega$. The group $\Gamma$ has a compact fundamental domain in $\hh^2\cup \Omega$, and therefore uniform $C^2$ estimates on $M$ follow from $C^2$ estimates on compact sets of $\hh^2\cup \Omega$. We deduce that $\|\phi^{-1}-{\rm Id}\|_{C^2(M)}\leq C\|\mu_0\|_{C^2(M)}$, which implies our claim.

We recall that any map $M\to M$ that is sufficiently $C^0$-close to the identity and fixes the boundary $\partial M$ is actually homotopic to the identity through maps that fix the boundary $\partial M$. Therefore, up to choosing $\epsilon>0$ small enough above, the diffeomorphism $\phi$ is homotopic to the identity through maps that fix the endpoints. If $\alpha$ is any geodesic of $(M,e^{2\eta}g_1)$ that joins two endpoints in $\partial M$, the curve $\phi\circ\alpha$ is the corresponding geodesic of $(M,g_2)$ joining the same endpoints. Since $\alpha$ and $\phi\circ\alpha$ are homotopic with fixed endpoints, and since $g_1$ and $g_2$ have the same marked boundary distance, $g_1$ and $e^{2\eta}g_1=\phi^*g_2$ have the same marked boundary distance as well. By Lemma~\ref{croke}, we conclude that $\eta=0$.
\end{proof}

\section{The negative curvature case using the method of Otal}

\subsection{The Liouville measure}

Let $(M,g)$ be a compact surface with strictly convex boundary and no conjugate points and, for now, 
we simply assume that the trapped set $K$ has zero Liouville measure, i.e.\ $\mu_g(K)=0$. 
We denote by $\til g=\pi^*g$ the lift of the Riemannian metric to the universal covering $\pi:\til M\to M$. 
By \eqref{muK} we have that $\mu_g(\Gamma_-\cup \Gamma_+)=0$, and thus  the tails 
$\til\Gamma_{\pm}\subset S\til M$ of the geodesic flow of $(\til M,\til g)$ 
have zero Liouville measure as well. Lemma~\ref{geodesics} implies that there exists a unique geodesic joining each pair of points $x,x'\in M$  
in each homotopy class of curves with endpoints $x$ and $x'$. Equivalently, for each pair of points $x,x'\in \til{M}$, there is a unique geodesic $\alpha_{\til g}(x,x'):[0,d_{\til{g}}(x,x')]\to \til M$ going from $x$ to $x'$. We consider the space 
\[\mc{G}:=(\partial \til{M}\x\partial \til{M})\setminus {\rm diag},\]
which we will see as the space of geodesics on $(\til M,\til g)$ by means of the map $(x,x')\mapsto\alpha_{\til g}(x,x')$. We denote by $\mc{M}$ the space of Borel measures on $\mc{G}$ invariant by the involution $(x,x')\mapsto(x',x)$.

Consider the open set
\[ U:=\big\{ (x,x',t)\in \mc{G}\x (0,\infty)\ \big|\ 0<t< d_{\til{g}}(x,x')\big\},\]
and the diffeomorphism
\begin{equation*}
\psi:  U\to S\til{M}\setminus (\til{\Gamma}_+\cup \til{\Gamma}_-) , \qquad 
\psi(x,x',t)=(\alpha_{\til g}(x,x')(t), \partial_t\alpha_{\til g}(x,x')(t)).
 \end{equation*} 
We denote by $\til\lambda$ the contact form on $S\til M$. We recall that the Liouville measure $\mu_{\til g}=|\til\lambda\wedge d\til\lambda|$ is invariant by the geodesic flow $\til{\varphi}_t$ of $(\til M,\til g)$. If $\til X$ denotes the geodesic vector field on $S\til M$, we have $\psi_*\pl_t=\til X$, and therefore
\begin{align}\label{needed_for_Liouville_1}
i_{\pl_t}(\psi^*(\til\lambda\wedge d\til\lambda))=\psi^*(i_{\til X}(\til\lambda\wedge d\til\lambda))=\psi^*(d\til\lambda).
\end{align}
Moreover, since $i_{\til X}d\til\lambda=0$, we have
\begin{align}\label{needed_for_Liouville_2}
\mc{L}_{\pl_t} \psi^*(d\til\lambda) = d( i_{\pl_t}\psi^*(d\til\lambda) ) = d( \psi^*i_{\til X}d\til\lambda) =0.
\end{align}
Equations \eqref{needed_for_Liouville_1} and \eqref{needed_for_Liouville_2} allow to define a measure $\eta_{\til{g}}\in\mc{M}$ satisfying
\begin{align}\label{definition_Liouville_on_geodesics}
\psi^*d\mu_{\til g}=d\eta_{\til g}\otimes dt.
\end{align}
With a slight abuse of terminology, we will call $\eta_{\til{g}}$ the \emph{Liouville measure} on $\mc{G}$ associated with the Riemannian metric $\til g$. Let us provide a useful expression of the Liouville measure in a particular coordinate system. For each pair of distinct points $x,x'\in\til M$, we denote by $\mc{F}(x,x')\subset\mc{G}$ the open subset of those $(y,y')\in\mc{G}$ whose associated geodesic $\alpha_{\til g}(y,y')$ has a positive transverse intersection with $\alpha_{\til g}(x,x')$. Notice that $\mc{F}(x,x')$ does not depend on the Riemannian metric $\til g$ whenever $x$ and $x'$ belong to the boundary $\partial\til M$. Consider the open set
\[V:=\big\{(\tau,\theta)\in(0,d_{\til{g}}(x,x'))\x (0,\pi)\ \big|\ \ell_{\til{g}}(\alpha(\tau),R_{\theta}\dot \alpha(\tau))+\ell_{\til{g}}(\alpha(\tau), -R_{\theta}\dot \alpha(\tau))<\infty\big\},\] 
where $\alpha(\tau):=\alpha_{\til g}(x,x')(\tau)$, and $R_\theta$ denotes the $+\theta$ rotation in the fibers of $S\til{M}$. We define a diffeomorphism 
\[\phi: V  \to \mc{F}(x,x'), \qquad \phi(\tau, \theta):=(y,y'),\]
where $y,y'$ are the endpoints of the geodesic passing trough $\alpha(\tau)$ and tangent to $R_{\theta}\dot \alpha(\tau)$:
\begin{align*}
 y'&=\pi_0\circ\til{\varphi}_{\ell_{\til{g}}(\alpha(\tau), R_{\theta}\dot \alpha(\tau))}(\alpha(\tau), R_{\theta}\dot \alpha(\tau) ),\\
 y&= 
\pi_0\circ\til{\varphi}_{\ell_{\til{g}}(\alpha(\tau), -R_{\theta}\dot \alpha(\tau))}(\alpha(\tau), -R_{\theta}\dot \alpha(\tau)).
\end{align*}
Here, $\pi_0:S\til M\to\til M$ denotes the base projection.

\begin{lemm}\label{Santalo}
The open set $V$ has full measure in $(0,d_{\til{g}}(x,x'))\x (0,\pi)$, and 
\begin{equation}\label{phi^*muL}
\phi^*\eta_{\til g}=\sin(\theta)d\tau\,d\theta.
\end{equation}
\end{lemm}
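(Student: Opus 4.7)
The plan is to verify the two assertions separately, using contact-geometric arguments and a Fermi-coordinate calculation.

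I would first dispose of the full-measure claim. Set $L:=d_{\til{g}}(x,x')$ and observe that the map $F:(0,L)\x(0,\pi)\to S\til{M}$ defined by $F(\tau,\theta):=(\alpha(\tau),R_\theta\dot\alpha(\tau))$ is a smooth embedding whose image $W$ is a surface transverse to the geodesic vector field $\til X$. By construction, the complement of $V$ in $(0,L)\x(0,\pi)$ is exactly $F^{-1}(\til\Gamma_+\cup\til\Gamma_-)$. Extending $F$ to a local flow box $\til F(\tau,\theta,s):=\til\varphi_s F(\tau,\theta)$, which is a local diffeomorphism onto an open subset of $S\til{M}$ for sufficiently small $s$, I would combine the flow-invariance of $\til\Gamma_\pm$, the fact that these sets have zero Liouville measure (recalled before~\eqref{santalo'}), and Fubini's theorem applied to the Liouville volume in the flow-box coordinates to conclude that $F^{-1}(\til\Gamma_+\cup\til\Gamma_-)$ has zero Lebesgue measure.

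For the formula, my approach is to recognize $\eta_{\til{g}}$ as the transverse invariant measure of the Reeb flow of $\til\lambda$. In any flow-box coordinates $(y_1,y_2,s)$ on $S\til{M}$ adapted to $\pl_s=\til X$, the identities $\til\lambda(\til X)=1$ and $i_{\til X}d\til\lambda=0$ force the Liouville volume to decompose as $|ds|$ tensored with the positive measure on the cross-section $\{s=0\}$ induced by the two-form $d\til\lambda$. Applied to the flow-box coordinates $(y,y',t)$ supplied by the diffeomorphism $\psi$ — in which $\psi^*d\mu_{\til g}=d\eta_{\til g}\otimes dt$ by~\eqref{definition_Liouville_on_geodesics} — this shows that, on any cross-section of the flow, the transverse measure associated with $\eta_{\til g}$ via the quotient map sending a vector to the pair of boundary endpoints of its geodesic coincides with the measure induced by $d\til\lambda$. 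In our setting this quotient map is $q:W\to\mc G$, and $\phi=q\circ F$, hence $\phi^*\eta_{\til g}=|F^*d\til\lambda|$.

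Finally, I would compute $F^*d\til\lambda$ using Fermi coordinates $(\tau,r)$ along $\alpha$, in which the metric takes the form $G(\tau,r)^2\,d\tau^2+dr^2$ with $G(\tau,0)=1$ and $\pl_r G(\tau,0)=0$. A unit tangent vector at $(\tau,r)$ reads $G^{-1}\cos\vartheta\,\pl_\tau+\sin\vartheta\,\pl_r$ (with $\vartheta$ the angle measured from $\alpha$), so the canonical contact form on $S\til M$ is $\til\lambda=G\cos\vartheta\,d\tau+\sin\vartheta\,dr$. Since $F(\tau,\theta)$ corresponds to $r=0$, $\vartheta=\theta$, and $r$ is constant along $W$, we obtain $F^*\til\lambda=\cos\theta\,d\tau$, whence $F^*d\til\lambda=\sin\theta\,d\tau\wedge d\theta$. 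Taking absolute values — and noting that $\sin\theta>0$ on $(0,\pi)$ — yields the claim. The main technical point to set up carefully is the contact-geometric identification $\phi^*\eta_{\til g}=|F^*d\til\lambda|$; the Fermi-coordinate computation itself is then a short direct verification.
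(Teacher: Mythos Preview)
Your proof is correct and follows a genuinely different route from the paper's. The paper applies Santal\'o's formula along the geodesic $\alpha$, treated as an interior hypersurface: it introduces the flow-box map $\psi_1(\tau,\theta,t)=\til\varphi_t(\alpha(\tau),R_\theta\dot\alpha(\tau))$ together with the induced boundary-type measure $d\mu_{\til g,\alpha}$ on $S\til M|_\alpha$, and reads off $\psi_1^*d\mu_{\til g}=\sin\theta\,d\tau\,d\theta\,dt$ directly from the Santal\'o disintegration; both the formula~\eqref{phi^*muL} and the full-measure statement then fall out of the same computation. You instead isolate the contact-geometric fact that the transverse Reeb measure is $|d\til\lambda|$ on any cross-section, deduce $\phi^*\eta_{\til g}=|F^*d\til\lambda|$, and compute $F^*d\til\lambda$ by hand in Fermi coordinates along $\alpha$; the full-measure claim you handle separately via a flow-box Fubini argument. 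Your approach is more elementary in that it bypasses the Santal\'o machinery and makes transparent why $\sin\theta$ appears (it is simply $d(\cos\theta\,d\tau)$), while the paper's proof is more unified and ties the lemma directly to the Santal\'o framework already set up in~\eqref{santalo'}, which is natural given how the lemma is used in the subsequent volume computations.
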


\begin{proof}
Let $W$ be the open set of points $(\tau,\theta,t)\in V\times\rr$ such that $\til{\varphi}_{t}(\alpha(\tau),R_\theta\dot \alpha(\tau))$ belongs to the interior of $S\til M$. We define the maps
\begin{align*}
\psi_1:W\to S\til M,&\qquad
\psi_1(\tau,\theta,t):=\til\varphi_{t}(\alpha(\tau),R_\theta\dot \alpha(\tau)),\\
\psi_2:W \to U,&\qquad
\psi_2(\tau,\theta,t):=(\phi(\tau,\theta),t+\ell_{\til{g}}(\alpha(\tau),-R_\theta\dot \alpha(\tau))),
\end{align*}
which are diffeomorphisms onto their images. Notice that $\psi_1=\psi\circ\psi_2$.

The Liouville measure $\mu_{\til g}$ induces a measure on the restriction $S\til M|_{\alpha}$ of the unit tangent bundle along the geodesic $\alpha=\alpha_{\til g}(x,x')$, given by
\begin{align*}
d\mu_{\til g,\alpha}(y,v):= 
|\til g_y(v,R_{\pi/2}\dot \alpha(\tau_y))|
\,d\mathrm{vol}_{\alpha}(y)\,dS_y(v),
\end{align*}
where $\tau_y\in(0,d_{\til g}(x,x'))$ is such that $\alpha(\tau_y)=y$, $d\mathrm{vol}_{\alpha}$ is the Riemannian volume induced by $\til g$ on $\alpha$, and $dS_y$ is the volume measure on the fiber $S_y\til M$ induced by $\til g$. We set $Z:=\psi_1(W)$. By Santal\'o's formula, for each $f\in C_c^\infty(Z)$  we have
\begin{align*}
\int_{Z} f\,d\mu_{\til g} 
&= 
\int_{S\til M|_{\alpha}} \int_{-\ell_{\til{g}}(y,-v)}^{\ell_{\til{g}}(y,v)}f(\til{\varphi}_t(y,v)) 
\, dt\, d\mu_{\til g,\alpha}(y,v) \\
&= 
\int_{0}^{d_{\til{g}}(x,x')} \int_{0}^\pi \int_{-\ell_{\til{g}}(\alpha(\tau),-R_\theta\dot \alpha(\tau))}^{\ell_{\til{g}}(\alpha(\tau),R_\theta\dot \alpha(\tau))}f(\til{\varphi}_t(\alpha(\tau),R_\theta\dot \alpha(\tau))) 
\sin(\theta) \, dt\, d\theta\,  d\tau ,
\end{align*}
and therefore $\psi_1^* d\mu_{\til g}=\sin(\theta)d\tau\,d\theta\,dt$. This, together with equation~\eqref{definition_Liouville_on_geodesics}, implies that $\psi_2^*(\eta_{\til g}\otimes dt)= \sin(\theta)d\tau\,d\theta\,dt$, which in turn implies~\eqref{phi^*muL}.
Since the incoming and outgoing tails $\til{\Gamma}_\pm$ have zero Liouville measure, another application of Santal\'o's formula along the same line as above implies that $V$ has full measure in $(0,d_{\til{g}}(x,x'))\x (0,\pi)$.
\end{proof}

An immediate consequence of Lemma \ref{Santalo} is that
\begin{equation}\label{moregeneral}
\eta_{\til g}(\mc{F}(x,x'))=\int_{0}^\pi\int_{0}^{d_{\til{g}}(x,x')}\sin(\theta) d\tau\, d\theta=2d_{\til{g}}(x,x'),\qquad \forall x,x'\in\til M.
\end{equation} 
In particular, the distance function $d_{\til{g}}$ is completely determined by (and actually equivalent to) the knowledge of the Liouville measure of every set $\mc{F}(x,x')$. In Otal's terminology \cite{Otal:1990ta, Otal:1990zh}, the quantity $\eta_{\til g}(\mc{F}(x,x'))$ is the \emph{intersection number} of the Liouville measure  with the Dirac measure supported on the geodesic $\alpha_{\til g}(x,x')$. These intersection numbers allow to recover the Liouville measure, as it follows from the next statement.

\begin{lemm}\label{douady}
Two measures $\mu,\mu'\in \mc{M}$ are such that $\mu(\mc{F}(x,x'))=\mu'(\mc{F}(x,x'))$ for all 
distinct points $x,x'\in \pl\til{M}$ if and only if $\mu=\mu'$.
\end{lemm}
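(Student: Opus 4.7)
The ``if'' direction is immediate, so I would focus on the converse. The strategy is to argue that the family $\mc{R}:=\{\mc{F}(x,x') : x,x'\in\pl\til M,\ x\neq x'\}$, via finite Boolean combinations, generates a $\pi$-system inside the $F$-invariant Borel $\sigma$-algebra of $\mc{G}$ (where $F(y,y'):=(y',y)$), and that this $\pi$-system already generates the full $F$-invariant Borel $\sigma$-algebra. Once this is done, the uniqueness-of-measure theorem for $\sigma$-finite measures on a generating $\pi$-system will yield $\mu=\mu'$.

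First I would reinterpret $\mc{F}(x,x')$ topologically. Because $\til M$ is simply connected and $\alpha_{\til g}(x,x')$ meets $\pl\til M$ only at its endpoints, this geodesic together with an arc of $\pl\til M$ bounds a topological disk in $\til M$; hence a second geodesic $\alpha_{\til g}(y,y')$ with endpoints $y,y'\in \pl\til M$ has a (necessarily unique and transverse) intersection with $\alpha_{\til g}(x,x')$ if and only if $y$ and $y'$ lie on opposite sides of this separating curve. As the text already points out, this ``crossing'' relation for four boundary points depends only on their cyclic arrangement along $\pl\til M$ and not on $\til g$.

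Next I would realize small ``symmetric rectangles'' from $\mc{R}$. Given two disjoint open arcs $I,J\subset\pl\til M$, small enough that a single boundary-to-boundary geodesic separates them, I would show that the $F$-invariant rectangle
\[ R_{I,J}:=(I\x J)\cup(J\x I) \]
belongs to the $\sigma$-algebra generated by $\mc R$. Concretely, if $I=(a,b)$ and $J=(c,d)$ lie in cyclic order $a,b,c,d$, the Step 1 criterion gives a Boolean identity of the schematic form
\[ R_{I,J} \;=\; \big(\mc{F}(a,c)\cap \mc{F}(b,d)\big)\setminus \big(\mc{F}(a,b)\cup \mc{F}(c,d)\cup \mc{F}(a,d)\cup \mc{F}(b,c)\big), \]
verified by checking the four-point crossing pattern. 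The family of such rectangles is closed under finite intersections and generates the $F$-invariant Borel $\sigma$-algebra of $\mc{G}$.

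Finally I would conclude by measure theory. Both $\mu$ and $\mu'$ are Radon (covered by countably many such rectangles with finite mass, hence $\sigma$-finite), $F$-invariant, and agree on $\mc{R}$ by hypothesis; therefore they agree on all finite Boolean combinations of sets in $\mc R$, in particular on every $R_{I,J}$. Dynkin's $\pi$-$\lambda$ theorem then forces $\mu=\mu'$ on the whole Borel $\sigma$-algebra. The main technical obstacle is the Boolean identity in Step 2, because $\pl\til M$ has possibly infinitely many components (lines or circles), so ``cyclic order'' has to be handled component by component; shrinking $I$ and $J$ to lie in a single component reduces it to the elementary four-point combinatorics of crossing chords in a disk, after which the argument is routine.
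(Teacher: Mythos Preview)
Your overall strategy—reduce to showing $\mu$ and $\mu'$ agree on symmetric rectangles $R_{I,J}$, and observe these generate the flip-invariant Borel $\sigma$-algebra—is precisely the paper's route. But the step you flag as the ``main technical obstacle'' is in fact a genuine gap, for two reasons. First, the Boolean identity you propose is not correct: with arcs $A_1=(a,b),A_2=(b,c),A_3=(c,d),A_4=(d,a)$, any pair $(y,y')$ with $y\in A_1$ and $y'\in A_3$ already lies in $\mc F(a,b)$ (its chord separates $a$ from $b$), so subtracting $\mc F(a,b)$ destroys the very block $A_1\times A_3$ you are trying to isolate. (Recall also that in the paper $\mc F(x,x')$ consists of \emph{positively} crossing ordered pairs, so e.g.\ $\mc F(a,c)\cap\mc F(b,d)$ is a single rectangle, not a symmetric one.) Second, and more seriously, even a correct set-theoretic identity would not suffice: from $\mu=\mu'$ on $\mc R$ you cannot conclude equality on intersections or set differences of elements of $\mc R$, because $\mc R$ is not itself a $\pi$-system. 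Your line ``therefore they agree on all finite Boolean combinations of sets in $\mc R$'' is exactly where the argument fails.

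What is actually needed—and what the paper supplies—is a \emph{linear} identity among the numbers $\mu(\mc F(x_i,x_j))$, in the spirit of recovering the measure of a rectangle from values of a distribution function. Writing $E,E',F,F'$ for the four boundary arcs cut out by $x_1,x_2,x_3,x_4$, each oriented set $\mc F(x_i,x_j)$ decomposes as a \emph{disjoint} union of product blocks $\mc G_{A,B}:=(A\times B)\setminus\mathrm{diag}$; taking the combination and using flip-invariance of $\mu$ one obtains
\[
\mu(\mc G_{E,F})=\tfrac12\big(\mu(\mc F(x_1,x_3))+\mu(\mc F(x_4,x_2))-\mu(\mc F(x_2,x_3))-\mu(\mc F(x_1,x_4))\big),
\]
which immediately gives $\mu(\mc G_{E,F})=\mu'(\mc G_{E,F})$. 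The paper then treats the diagonal case $E=F$ separately, by a dyadic splitting of $E$ into halves $E_{i_1\cdots i_n}$ and writing $\mc G_{E,E}$ as a countable disjoint union of off-diagonal rectangles already handled. Your proposal does not address this diagonal case, and also assumes without justification that general $\mu\in\mc M$ are Radon, which is not part of the hypothesis; the paper's explicit additive formula avoids both issues.
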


\begin{proof} 
We recall that the boundary $\pl\til{M}$ is homeomorphic to a countable union of real lines embedded in the circle $S^1$. We consider two subsets $E,F\subset \pl\til{M}$, each one being the intersection of $\pl\til{M}$ with a compact interval in  $S^1$. We set 
\[
\mc{G}_{E,F}
:=
(E\times F)\setminus\mathrm{diag}.
\] 
All we have to prove is that $\mu(\mc{G}_{E,F})=\mu'(\mc{G}_{E,F})$. The subsets $E$ and $F$ may overlap. By the additivity property of measures, it is enough to consider two cases: $E=F$ or $E\cap F=\varnothing$.

\begin{figure}
\begin{center}
\begin{small}
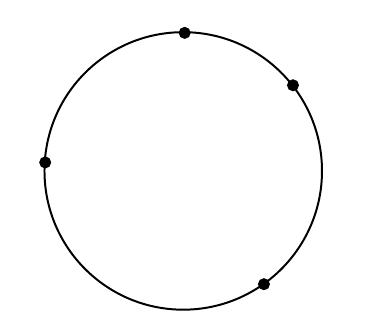 
\caption{The circle containing $\partial\til M$. The sets $E,E',F,F'$ are not actually intervals in the circle, but are the intersection of the intervals drawn with $\partial\til M$.}
\label{f:douady}
\end{small}
\end{center}
\end{figure}

Let us first consider the case where $E$ and $F$ are disjoint, or intersects at most at one boundary point. We denote by $x_1,x_2$ the points in the boundary of $E$ ordered according to the orientation of $E$, and analogously by $x_3,x_4$ the points in the boundary of $F$. We denote by $E',F'\subset\pl\til{M}$ the complementary regions (which may be empty), as in Figure~\ref{f:douady}. We have
\begin{align*}
\mu(\mc{F}(x_1,x_3)) & = \mu(\mc{G}_{E,F}) + \mu(\mc{G}_{E,F'})+\mu(\mc{G}_{E',F}) + \mu(\mc{G}_{E',F'}),\\
\mu(\mc{F}(x_4,x_2)) & = \mu(\mc{G}_{E,F}) + \mu(\mc{G}_{E,E'})+\mu(\mc{G}_{F',F}) + \mu(\mc{G}_{F',E'}),\\
\mu(\mc{F}(x_2,x_3)) & = \mu(\mc{G}_{E',E}) + \mu(\mc{G}_{E',F'})+\mu(\mc{G}_{E',F}),\\
\mu(\mc{F}(x_1,x_4)) & = \mu(\mc{G}_{E,F'}) + \mu(\mc{G}_{E',F'})+\mu(\mc{G}_{F,F'}).
\end{align*}
Since $\mu$ is invariant under the involution $(x,x')\mapsto(x',x)$, we have $\mu(\mc{G}_{E',F})=\mu(\mc{G}_{F,E'})$, $\mu(\mc{G}_{E',F'})=\mu(\mc{G}_{F',E'})$, and $\mu(\mc{G}_{F',F})=\mu(\mc{G}_{F,F'})$. Therefore
\begin{align*}
\mu(\mc{G}_{E,F})& =\tfrac12 \big( \mu(\mc{F}(x_1,x_3)) + \mu(\mc{F}(x_4,x_2)) - \mu(\mc{F}(x_2,x_3)) - \mu(\mc{F}(x_1,x_4)) \big)\\
& =\tfrac12 \big( \mu'(\mc{F}(x_1,x_3)) + \mu'(\mc{F}(x_4,x_2)) - \mu'(\mc{F}(x_2,x_3)) - \mu'(\mc{F}(x_1,x_4)) \big)\\
&= \mu'(\mc{G}_{E,F}).
\end{align*}

Now, consider the case where $E=F$. We parametrize the circle $S^1$ containing $\pl\til{M}$ from $\theta=0$ to $\theta=2\pi$ in such a way that $E=[0,\theta_0]\cap \pl\til{M}$ for some $\theta_0\in[0,2\pi]$. We split $E$ in two parts of equal size, so that $E=E_1\cup E_2$ for $E_1=[0,\theta_0/2]\cap \pl\til{M}$ and $E_2=[\theta_0/2,\theta_0]\cap \pl\til{M}$. We apply the same splitting iteratively, so that $E_1=E_{11}\cup E_{12}$, $E_2=E_{21}\cup E_{22}$, and more generally $E_{i_0...i_{n}}=E_{i_0...i_{n-1}1}\cup E_{i_0...i_{n-1}2}$. Notice that
\begin{align*}
\mc{G}_{E,E}=\bigcup_{n=1}^\infty \bigcup_{i_1,...,i_{n-1}\in\{1,2\}} 
\big(
\mc{G}_{E_{i_0...i_{n-1}1},E_{i_0...i_{n-1}2}}
\cup
\mc{G}_{E_{i_0...i_{n-1}2},E_{i_0...i_{n-1}1}}
\big).
\end{align*}
By the $\sigma$-additivity of measures, we have
\begin{align*}
\mu(\mc{G}_{E,E})
&=\sum_{n=1}^\infty \sum_{i_1,...,i_{n-1}\in\{1,2\}} 
\big(
\mu(\mc{G}_{E_{i_0...i_{n-1}1},E_{i_0...i_{n-1}2}})
+
\mu(\mc{G}_{E_{i_0...i_{n-1}2},E_{i_0...i_{n-1}1}})
\big)\\
&=\sum_{n=1}^\infty \sum_{i_1,...,i_{n-1}\in\{1,2\}} 
\big(
\mu'(\mc{G}_{E_{i_0...i_{n-1}1},E_{i_0...i_{n-1}2}})
+
\mu'(\mc{G}_{E_{i_0...i_{n-1}2},E_{i_0...i_{n-1}1}})
\big)\\
&=\mu'(\mc{G}_{E,E}).
\qedhere
\end{align*}
\end{proof}

The following is an immediate consequence of equation~\eqref{moregeneral} and Lemma~\ref{douady}.

\begin{corr}\label{same_dtilde_same_Liouville}
Let $M$ be a compact surface with smooth boundary, and $g_1,g_2$ two  Riemannian metrics with no conjugate points, strictly convex boundary $\partial M$, and 
trapped set of zero Liouville measure. Then $g_1$ and $g_2$ have the same marked boundary distance if and only if they have the same Liouville measure $\eta_{\til g_1}=\eta_{\til g_2}$.
\hfill\qed
\end{corr}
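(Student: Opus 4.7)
The plan is to deduce this corollary directly from the tools already developed in this subsection, namely formula~\eqref{moregeneral} and Lemma~\ref{douady}. The key observation, already recorded before Lemma~\ref{Santalo}, is that when $x,x'\in\pl\til M$, the set $\mc{F}(x,x')\subset\mc{G}$ depends only on the four boundary points involved and not on the Riemannian metric; this is what makes the intersection numbers a common book-keeping device for both $\til g_1$ and $\til g_2$.

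First I would handle the forward implication. Assume $g_1$ and $g_2$ have the same marked boundary distance. As observed in the paragraph following~\eqref{distancecover}, this is equivalent to the equality $d_{\til g_1}=d_{\til g_2}$ on $\pl\til M\x\pl\til M$. Applied to boundary points, equation~\eqref{moregeneral} then yields
\begin{equation*}
\eta_{\til g_1}(\mc{F}(x,x'))=2d_{\til g_1}(x,x')=2d_{\til g_2}(x,x')=\eta_{\til g_2}(\mc{F}(x,x'))
\end{equation*}
for every pair of distinct points $x,x'\in\pl\til M$, and Lemma~\ref{douady} immediately gives $\eta_{\til g_1}=\eta_{\til g_2}$.

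The reverse implication is just this chain read backwards: if $\eta_{\til g_1}=\eta_{\til g_2}$, then evaluating on each $\mc{F}(x,x')$ with $x,x'\in\pl\til M$ and using~\eqref{moregeneral} once more forces $d_{\til g_1}(x,x')=d_{\til g_2}(x,x')$ for all distinct boundary points, hence equality on the whole diagonal by convention, and this is in turn equivalent to equality of the marked boundary distances.

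There is no real obstacle here; the only minor point to be careful about is that~\eqref{moregeneral} was derived for points in the interior $\til M$, so one needs to confirm that it extends continuously (or can be applied directly) to $x,x'\in\pl\til M$, which is immediate from the strict convexity of the boundary and the explicit parametrization in Lemma~\ref{Santalo}, together with the fact that $\mc{F}(x,x')$ is metric-independent for boundary points.
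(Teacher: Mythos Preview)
Your argument is correct and is exactly the route the paper intends: the corollary is stated as an immediate consequence of~\eqref{moregeneral} and Lemma~\ref{douady}, and that is precisely the two-step chain you wrote down in each direction. Your caveat about~\eqref{moregeneral} being restricted to interior points is unnecessary, since the paper states that identity for all $x,x'\in\til M$, boundary included.
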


\subsection{The rigidity result}
Let $M$ be a compact surface with smooth boundary. On $M$, we consider a negatively-curved Riemannian metric $g_1$ that makes $\partial M$ strictly convex; 
we then consider  another Riemannian metric $g_2$ with no conjugate points, that makes $\partial M$ strictly convex, and such that the trapped set of its geodesic flow has zero Liouville measure. 
Since $(M,g_1)$ has negative curvature, it does not have conjugate points and the trapped set of its geodesic flow has zero Liouville measure (see \cite[Section~2.4]{Guillarmou:2014jk}). Therefore, $g_1$ and $g_2$ satisfy the assumptions of Corollary~\ref{same_dtilde_same_Liouville}. Assume that they also have the same marked boundary distance $d_{\til g_1}=d_{\til g_2}$, and thus the same Liouville measures $\eta_{\til g_1}=\eta_{\til g_2}$ according to Corollary~\ref{same_dtilde_same_Liouville}. We denote by $SM$ and 
$S\til{M}$ the unit tangent bundle for $g_1$ over $M$ and for $\til{g}_1$ over $\til{M}$, and by $\til\Gamma_{\pm}$ the incoming/outgoing tails of the geodesic flow of $(\til M,\til g_1)$.
For each $(x,v)\in S\til{M}$, let $R_\theta v\in S_x\til{M}$ be the vector obtained after a counterclockwise rotation of  $v$ in the fiber by an angle $\theta\in[0,\pi]$, where the angle is measured by $\til g_1$. Consider the two geodesics of $\til{g}_1$ of $S\til{M}$ intersecting at $x$ and with tangent vectors $v$ and $R_\theta v$. If both $(x,v)$ and $(x,R_\theta v)$ are not in $\til{\Gamma}_-\cup\til{\Gamma}_+$, these geodesics 
can be written as $\alpha_{\til g_1}(z,z')$ and $\alpha_{\til g_1}(w,w')$ for some $z,z',w,w'\in \pl\til{M}$. 
The corresponding geodesics $\alpha_{\til g_2}(z,z')$ and $\alpha_{\til g_2}(w,w')$ for the metric $\til{g}_2$ intersect at some point $x''$ with an angle $\til\theta''(x,v,\theta)$.
This defines a function 
\[\til\theta'' : W_1 \to [0,\pi],\]
where $W_1$ is the open set 
\[W_1:=\big\{(x,v,\theta)\in S\til{M}\x[0,\pi]\ \big|\ (x,v), 
(x,R_\theta v)\notin (\til{\Gamma}_-\cup\til{\Gamma}_+)\big\}.\]
The complement $(S\til{M}\x[0,\pi])\setminus W_1$ has zero measure with respect to $\mu_{\til g_1}\otimes d\theta$. Let  $Z\subset \mc{G}\times \mc{G}$ be the open set of quadruples $(z,z',w,w')$ whose associated geodesics $\alpha_{\til g_1}(z,z')$ and $\alpha_{\til g_1}(w,w')$ are distinct and intersect at one point. We define the diffeomorphism
\begin{equation*}
\kappa_{1}:W_1\to Z,\qquad \kappa_{1}(x,v,\theta)= (z,z',w,w'),
\end{equation*} 
where, as before, $\alpha_{\til g_1}(z,z')$ and $\alpha_{\til g_1}(w,w')$ are the geodesic passing through the point $x$ with tangent vectors $v$ and $R_\theta v$ respectively. For the Riemannian metric $\til g_2$ we can define an analogous diffeomorphism $\kappa_{2}:W_2\to Z$. Notice that the function $\til\theta''$ appears in the composition $\kappa_{2}^{-1}\circ\kappa_1$, which indeed has the form
\begin{align}\label{composition}
\kappa_{2}^{-1}\circ\kappa_1(x,v,\theta)=(x''(x,v,\theta),v''(x,v,\theta),\til\theta''(x,v,\theta)).
\end{align}
Therefore $\til\theta''$ is a smooth function.  

\begin{lemm}\label{lemma_theta}
For all $(x,v)\in S\til{M}$ and $\theta_1,\theta_2\in[0,\pi]$ such that $\theta_1+\theta_2\in [0,\pi]$, $(x,v,\theta_1)\in W_1$, and $(x,R_{\theta_1}v,\theta_2)\in W_1$, we have
\begin{gather}
\label{symmetry_theta_prime}
\til\theta''(x,v,\theta_1)+\til\theta''(x,R_{\theta_1} v, \pi-\theta_1)=\pi,\\
\label{subadditivity}
\til\theta''(x,v,\theta_1)+\til\theta''(x,R_{\theta_1}v,\theta_2)
\leq 
\til\theta''(x,v,\theta_1+\theta_2).
\end{gather}
Moreover, if we set 
\begin{equation}\label{geodesics_passing_through_x}
\begin{split}
(z,z',w,w')& := \kappa_{1}(x,v,\theta_1),\\
(w,w',y,y') & := \kappa_{1}(x,R_{\theta_1} v,\theta_2),
\end{split}
\end{equation}
the inequality~\eqref{subadditivity} is satisfied as an equality if and only if the three geodesics $\alpha_{\til g_2}(z,z')$, $\alpha_{\til g_2}(w,w')$, and $\alpha_{\til g_2}(y,y')$ intersect at one point of $\til M$.
\end{lemm}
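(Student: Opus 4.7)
I would prove the two assertions separately. For \eqref{symmetry_theta_prime}: since $R_{\pi-\theta_1}(R_{\theta_1}v)=-v$, both triples $(x,v,\theta_1)$ and $(x,R_{\theta_1}v,\pi-\theta_1)$ select the same unordered pair of unoriented $g_1$-geodesics through $x$, so via $\kappa_1$ they yield the same boundary quadruple up to swapping one pair $(z,z')\leftrightarrow(z',z)$. The corresponding pair of $g_2$-geodesics and their unique intersection point $x''$ coincide in the two cases, but the orientation of the first geodesic is reversed. Since $\til\theta''$ is, by construction, a counterclockwise angle in $[0,\pi]$ from the first to the second oriented tangent at $x''$, reversing the first tangent converts $\alpha$ into $\pi-\alpha$, yielding \eqref{symmetry_theta_prime}.

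For \eqref{subadditivity}, let $\sigma_1,\sigma_2,\sigma_3$ denote the three $g_2$-geodesics $\alpha_{\til g_2}(z,z')$, $\alpha_{\til g_2}(w,w')$, $\alpha_{\til g_2}(y,y')$. By Lemma~\ref{geodesics} applied in the simply-connected surface $(\til M,\til g_2)$ without conjugate points, any two distinct $g_2$-geodesics meet at most once. The cyclic order of the six endpoints on $\partial\til M$ is inherited from the rotational order of $v, R_{\theta_1}v, R_{\theta_1+\theta_2}v$ in $S_x\til M$; each pair among $\sigma_1,\sigma_2,\sigma_3$ is linked at the boundary, so the pairwise intersections $p_{12},p_{23},p_{13}\in\til M$ are all well-defined, and they are either all equal (concurrent case) or pairwise distinct (triangle case).

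In the concurrent case with common intersection point $q$, the three $g_2$-tangents $u_1,u_2,u_3\in S_q\til M$, oriented toward $z',w',y'$, inherit the cyclic order of $z',w',y'$ on $\partial\til M$ (because the $g_2$-exponential map at $q$ is a local diffeomorphism preserving cyclic order), which matches the $g_1$-rotational order at $x$. This forces $\psi_1+\psi_2=\psi_3$, so \eqref{subadditivity} is an equality. In the triangle case, $p_{12},p_{23},p_{13}$ are the vertices of a non-degenerate $g_2$-geodesic triangle $T\subset\til M$. A careful tangent-orientation analysis at each vertex (analogous to the concurrent case, using the cyclic ordering of the six endpoints) identifies the interior angles of $T$ as $\psi_1$ at $p_{12}$, $\psi_2$ at $p_{23}$, and $\pi-\psi_3$ at $p_{13}$. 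Gauss--Bonnet for this geodesic triangle then gives
\begin{align*}
\psi_1+\psi_2-\psi_3 = \int_T K_{g_2}\,dA_{g_2}.
\end{align*}

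The main obstacle is to establish the sign $\int_T K_{g_2}\,dA_{g_2}\leq 0$ in the triangle case, with equality forcing $T$ to be degenerate; this cannot follow from $g_2$ alone, since no curvature sign is assumed. I would exploit the hypothesis $d_{\til g_1}=d_{\til g_2}$, which by Corollary~\ref{same_dtilde_same_Liouville} gives equality of the Liouville measures $\eta_{\til g_1}=\eta_{\til g_2}$; combined with the strict negative curvature of $g_1$ and a comparison with the concurrent (zero-area) $g_1$-configuration at $x$, this should force the required sign, with equality iff the three $g_2$-geodesics are themselves concurrent.
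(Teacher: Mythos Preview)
Your argument for \eqref{symmetry_theta_prime} is essentially the paper's (which just says it ``follows from the very definition of $\til\theta''$''), and your setup for \eqref{subadditivity} via the $\til g_2$-geodesic triangle and Gauss--Bonnet is exactly the paper's: the three interior angles are $\til\theta''(x,v,\theta_1)$, $\til\theta''(x,R_{\theta_1}v,\theta_2)$, and $\til\theta''(x,R_{\theta_1+\theta_2}v,\pi-\theta_1-\theta_2)=\pi-\til\theta''(x,v,\theta_1+\theta_2)$, so Gauss--Bonnet gives precisely your identity $\psi_1+\psi_2-\psi_3=\int_T K_{\til g_2}\,dA_{\til g_2}$.

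The gap is in how you handle the sign of $\int_T K_{\til g_2}\,dA_{\til g_2}$. The paper does \emph{not} try to extract this sign from $d_{\til g_1}=d_{\til g_2}$ or from $\eta_{\til g_1}=\eta_{\til g_2}$; it simply invokes negative curvature of the metric in which the triangle lives. (There is a labeling inconsistency between the opening paragraph of Section~3.2, which assigns negative curvature to $g_1$, and the proof of the lemma, which says ``since $\til g_2$ has negative curvature''; but the intent is unambiguous: $\til\theta''$ is set up so that the geodesic triangle sits in the \emph{negatively curved} metric, and then Gauss--Bonnet gives the angle sum $<\pi$ directly, with equality exactly when the triangle degenerates to a point.)

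Your proposed alternative cannot work as stated. The equality $\eta_{\til g_1}=\eta_{\til g_2}$ is an integrated identity on the space of geodesics and does not force a sign on the curvature integral over one particular triangle $T$; for a general metric with no conjugate points there certainly exist geodesic triangles with $\int_T K\,dA>0$. In the paper, the hypothesis $d_{\til g_1}=d_{\til g_2}$ is only used \emph{later}, in the proof of Theorem~\ref{Th2}, and only in an averaged way (via the function $\Theta$ and Santal\'o's formula). The lemma itself is a pointwise consequence of the curvature sign, and you should use the negative-curvature hypothesis directly rather than trying to route around it.
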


\begin{proof}
Equation~\eqref{symmetry_theta_prime} follows from the very definition of $\til\theta''$. Consider the three pairs $(z,z'),(w,w'),(y,y')\in\mc{G}$  defined by~\eqref{geodesics_passing_through_x}, and  the corresponding geodesics $\alpha_{\til g_2}(z,z')$, $\alpha_{\til g_2}(w,w')$, and $\alpha_{\til g_2}(y,y')$ for the Riemannian metric $\til g_2$. Some portions of these three geodesics are the edges of a geodesic triangle in $(\til{M},\til g_2)$ whose vertices are the mutual intersections among $\alpha_{\til g_2}(z,z')$, $\alpha_{\til g_2}(w,w')$, and $\alpha_{\til g_2}(y,y')$. This triangle may degenerate to a single point $x_0$ if the three geodesics intersect at $x_0$. The interior angles of this geodesic triangle are precisely $\til\theta''(x,v,\theta_1)$, $\til\theta''(x,R_{\theta_1}v,\theta_2)$, and $\til\theta''(x,R_{\theta_1+\theta_2}v,\pi-\theta_1-\theta_2)$. 
Since $\til{g}_2$ has negative curvature, Gauss-Bonnet formula for geodesic polygons implies 
\begin{align}
\label{GB}
\til\theta''(x,v,\theta_1)+\til\theta''(x,R_{\theta_1}v,\theta_2)+\til\theta''(x,R_{\theta_1+\theta_2}v,\pi-\theta_1-\theta_2)
\leq \pi.
\end{align}
This inequality is not strict if and only if the geodesic triangle is reduced to a point $x_0\in\til M$, that is, if and only if the three geodesics $\alpha_{\til g_2}(z,z')$, $\alpha_{\til g_2}(w,w')$, and $\alpha_{\til g_2}(y,y')$ intersect at $x_0$. Finally, \eqref{GB} and \eqref{symmetry_theta_prime} imply~\eqref{subadditivity}.
\end{proof}

\begin{proof}[Proof of Theorem~\ref{Th2}]
Notice that $\til\theta''$ descends to $SM \x[0,\pi]$ as a measurable bounded map $\theta'':SM\x[0,\pi]\to[0,\pi]$ which is smooth on an open set of full measure.
We introduce the average angle function
\[ \Theta : [0,\pi]\to [0,\pi], \qquad \Theta(\theta):=\frac{1}{{\rm Vol}_{g_1}(SM)}\int_{SM} \theta''(x,v,\theta)\,d\mu_{g_1}(x,v),
\]
which is continuous by Lebesgue theorem. Since $\theta''(x,v,0)=0$ and $\theta''(x,v,\pi)=\pi$ almost everywhere, we have 
\begin{align}
\label{endpoints_Theta}
\Theta(0)=0,\qquad \Theta(\pi)=\pi.
\end{align}
Notice that the rotation $R_\theta$, seen as a diffeomorphism of the unit tangent bundle $SM$, preserves the Liouville measure $\mu_{g_1}$. Clearly, Equations~\eqref{symmetry_theta_prime} and~\eqref{subadditivity} still hold if we replace $\til\theta''$ with $\theta''$, and if we integrate them on $SM$ against $\mu_{g_1}$ we obtain 
\begin{align}
\label{symmetry_Theta}
\Theta(\pi-\theta)&=\pi-\Theta(\theta),&&\forall\theta\in[0,\pi],\\
\label{subadditivity_Theta}
\Theta(\theta_1+\theta_2)&\geq \Theta(\theta_1)+\Theta(\theta_2),&&\forall\theta_1,\theta_2\in[0,\pi]\mbox{ with }\theta_1+\theta_2\in [0,\pi].
\end{align}
If $f:[0,\pi]\to\rr$ is a continuous convex function, we have Jensen's inequality
\begin{align}
\label{Jensen}
f(\Theta(\theta)) \leq \frac{1}{{\rm Vol}_{g_1}(SM)}\int_{SM} f(\theta''(x,v,\theta))\,d\mu_{g_1}(x,v). 
\end{align}
Actually, if $f$ is strictly convex, this inequality is satisfied as an equality if and only if $\theta''(x,v,\theta)=\theta$ for almost all $(x,v)\in SM$. Integrating in $\theta$, by Fubini's Theorem we get
\[ \int_0^\pi f(\Theta(\theta))\sin(\theta)d\theta 
\leq \frac{1}{{\rm Vol}_{g_1}(SM)}\int_{SM} \int_0^\pi f(\theta''(x,v,\theta))\sin(\theta)d\theta\,d\mu_{g_1}(x,v).\]
We set 
\begin{align*}
F(x,v):=\int_0^\pi f(\theta''(x,v,\theta))\sin(\theta)d\theta.
\end{align*}
By applying Santal\'o's formula, we obtain
\[ 
\int_{SM}F(x,v)\,d\mu_{g_1}
=
\int_{\pl_-SM}\int_{0}^{\ell_{g_1}(x,v)} F(\varphi_t(x,v))dt\,d\mu_{g_1,\nu}(x,v).
\]
We fix two arbitrary distinct points $x,x'\in\partial\til M$. For $i=1,2$, Lemma~\ref{Santalo} gives a subset of full measure $V_i\subset(0,d_{\til g_1}(x,x'))\times(0,\pi)$ and a diffeomorphisms $\phi_i:V_i\to\mc{F}(x,x')$ such that $\phi_i^*\eta_{\til g_i}=\sin(\theta)d\tau\,d\theta$. We consider the composition $\phi_2^{-1}\circ \phi_1: V_1\to V_2$, which has the form
\[\qquad   (\tau, \theta)\mapsto ( \tau''(\tau,\theta), 
\til\theta''(\alpha(\tau),\dot\alpha(\tau),\theta)),\]
where $\alpha(\tau):=\alpha_{\til g_1}(x,x')$. Since $\eta_{\til g_1}=\eta_{\til g_2}$, Lemma~\ref{Santalo} implies
\begin{align*}
(\phi_2^{-1}\circ \phi_1)^* \sin(\theta) d\tau\,d\theta=
\phi_1^*\eta_{\til g_2}=
\phi_1^*\eta_{\til g_1}=
\sin(\theta) d\tau\,d\theta.
\end{align*}
From this we infer that, for all $(x,v)\in \pl_-S\til{M}\setminus \til{\Gamma}_-$,
\begin{align*}
\int_{0}^{\ell_{g_1}(\pi(x,v))}F(\varphi_\tau(\pi(x,v)))d\tau= &
\int_{0}^{d_{\til{g}_1}(x,x')}F(\pi(\til{\varphi}_\tau(x,v)))d\tau \\
=&\int_{0}^{d_{\til{g}_1}(x,x')}\int_0^\pi 
f(\til\theta''(\alpha(\tau),\dot\alpha(\tau),\theta))\sin(\theta)d\theta\, d\tau \\
 =&\int_{0}^{d_{\til{g}_1}(x,x')}\int_0^\pi 
f(\theta)\sin(\theta)d\theta\,d\tau\\ 
=& \ell_{g_1}(\pi(x,v))\int_0^\pi 
f(\theta)\sin(\theta)d\theta,
\end{align*}
where $v\in S_{x}\til M$ denotes the tangent vector to the geodesic $\alpha_{\til g_1}(x,x')$ at the starting point $x$. We integrate the previous equality over $\partial_-SM$ against $d\mu_{g_1,\nu}$. By applying Santal\'o's formula to the left-hand side, the volume computation~\eqref{Volume} to the right-hand side, and finally Jensen's inequality~\eqref{Jensen} to the left-hand side, we obtain
\begin{equation*}
\int_0^\pi f(\Theta(\theta))\sin(\theta)d\theta 
\leq \int_0^\pi f(\theta)\sin(\theta)d\theta.
\end{equation*}
Since this inequality must be satisfied for all continuous convex functions $f:[0,\pi]\to\rr$, Equations~\eqref{endpoints_Theta}, \eqref{symmetry_Theta}, and \eqref{subadditivity_Theta} readily imply that $\Theta(\theta)=\theta$ for all $\theta\in[0,\pi]$, see \cite[Lemma 8]{Otal:1990ta}. Therefore, the inequality~\eqref{subadditivity} is satisfied as an equality. By the ``moreover'' part of Lemma~\ref{lemma_theta}, whenever three geodesics $\alpha_{\til g_1}(x,x')$, $\alpha_{\til g_1}(y,y')$, $\alpha_{\til g_1}(z,z')$ intersect at a single point $x_0$, the corresponding geodesics for the other Riemannian metric $\alpha_{\til g_2}(x,x'),\alpha_{\til g_2}(y,y'),\alpha_{\til g_2}(z,z')$ must intersect in a single point, which we denote by $\til\psi(x_0)$, as well. This defines a map $\til\psi:\til M\to\til M$.  Fix a point $x\in\til M$. The domain $W_1$ of $\kappa_1:W_1\to Z$ is an open set with projection on the base given by the whole manifold $\til{M}$. Consider the composition of diffeomorphisms $\kappa_2^{-1}\circ\kappa_1$, which has the form~\eqref{composition}. For each $x\in U$, if $v\in T_x\til M$ and $\theta\in[0,\pi]$ are such that both $(x,v)$ and $(x,R_\theta v)$ are not in $\til{\Gamma}_-\cup\til{\Gamma}_+$, we have $\til\psi(x)=x''(x,v,\theta)$. This shows that  $\til\psi$ is smooth. 

For $x_0\in \pl\til{M}$, we can choose four points
$y,y',z,z'\in \pl\til{M}$ near $x_0$ so that, according to the orientation of $\partial \til M$, the points are in the order $y,z,x_0,y',z'$. If both $y,z'$ tend to $x_0$, then so do the points $y',z$ and, by the  strict convexity of $\pl\til{M}$, the intersection $x:=\alpha_{\til g_1}(y,y')\cap\alpha_{\til g_1}(z,z')$. By  the same reasoning $\til\psi(x)=\alpha_{\til g_2}(y,y')\cap\alpha_{\til g_2}(z,z')$  converges to $x_0$ as well. This proves that $\til\psi|_{\pl \til{M}}={\rm Id}$.

Clearly $\til\psi$ maps every geodesic of $\til g_1$ to the geodesic of $\til g_2$ with the same endpoints. For $i=1,2$ and for all distinct points $x,x'\in\til M$, consider the open set $\mc{F}_i(x,x')\subset\mc{G}$ of those $(y,y')$ whose associated geodesic $\alpha_{\til g_i}(y,y')$ has a positive transverse intersection with $\alpha_{\til g_i}(x,x')$. It is straighforward to see that \[\mc{F}_1(x,x')=\mc{F}_2(\til\psi(x),\til\psi(x')).\] Therefore, by Equation~\eqref{moregeneral}, we have
\begin{align*}
d_{\til{g}_1}(x,x')=\tfrac12\eta_{\til g_1}(\mc{F}_1(\til\psi(x),\til\psi(x')))=\tfrac12\eta_{\til g_2}(\mc{F}_2(\til\psi(x),\til\psi(x')))=d_{\til{g}_2}(\til\psi(x),\til\psi(x')).
\end{align*}
This readily implies that $\til\psi^*\til g_2=\til g_1$. Indeed, for all $(x,v)\in SM$, if $\zeta:[0,\epsilon)\to\til M$ is a smooth curve such that $\zeta(0)=x$ and $\dot\zeta(0)=v$, we have
\begin{align*}
\|v\|_{\til g_1} =
\lim_{t\to0} \frac{d_{\til{g}_1}(\zeta(0),\zeta(t))}{t}
=
\lim_{t\to0} \frac{d_{\til{g}_2}(\til\psi(\zeta(0)),\til\psi(\zeta(t)))}{t}
=
\|d\til\psi(x)v\|_{\til g_2}.
\end{align*}
Finally, since the fundamental group $\pi_1(M)$ acts on $\til M$ by isometries with respect to both $\til{g}_1$ and $\til{g}_2$, we have $\til\psi(\gamma\cdot x)=\gamma\cdot\til\psi(x)$ for each $\gamma\in \pi_1(M)$, and thus $\til\psi$ descends to a smooth isometry $\psi:M\to M$ fixing the boundary. 
\end{proof}

\bibliography{biblio}
\bibliographystyle{amsalpha}

\end{document}